\newtheorem{theorem}{Theorem}[section]
\newtheorem{question}[theorem]{Question}
\newtheorem{lemma}[theorem]{Lemma}
\newtheorem{prop}[theorem]{Proposition}
\newtheorem{conj}[theorem]{Conjecture}
\newtheorem{claim}[theorem]{Claim}
\theoremstyle{definition}
\newtheorem{definition}[theorem]{Definition}
\newtheorem{remark}[theorem]{Remark}
\newcommand{\prodr}{\overrightarrow{\prod}}
\newcommand{\prodl}{\overleftarrow{\prod}}
\newcommand{\IP}{\overleftarrow{IP_0}}
\newcommand{\ep}{\varepsilon}
\title{Monochromatic non-commuting products}
\author{Matt Bowen}
\address{Mathematical Institute, Radcliffe Observatory Quarter, Woodstock Road, Oxford OX2 6GG, England}
\email{bowen@maths.ox.ac.uk}
\thanks{The author is supported by Ben Green's Simons Investigator Grant number 376201}
\date{April 2024}
\begin{document}

\begin{abstract}
    We show that a finite coloring of an amenable group contains `many' monochromatic sets of the form $\{x,y,xy,yx\},$ and natural extensions with more variables.  This gives the first combinatorial proof and extensions of Bergelson and McCutcheon's non-commutative Schur theorem.  Our main new tool is the introduction of what we call `quasirandom colorings,' a condition that is automatically satisfied by colorings of quasirandom groups, and a reduction to this case. 
\end{abstract}

\maketitle

\section{Introduction}

In this paper we will be concerned with the following conjectures of Bergelson and McCutcheon.  In the following and throughout, a group is \textbf{sufficiently non-commutative} if the centralizer subgroup of every element has infinite index.

\begin{conj}[\cite{bergelson2009selection}, Conjecture 2.3] \label{conj1}

     Suppose that a sufficiently non-commutative group is finitely colored.  There is a monochromatic set of the form $\{x,y,xy,yx\}$ with $xy\neq yx.$ 
\end{conj}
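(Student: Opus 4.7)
The plan is to prove \Cref{conj1} in the amenable sufficiently non-commutative case via a density argument combined with a structure-versus-randomness reduction. Fix a finite coloring $c \colon G \to [r]$ and a left Følner sequence $(F_n)$ in $G$, giving an upper density $\bar d$. By pigeonhole some color class $A = c^{-1}(i)$ satisfies $\alpha := \bar d(A) > 0$, and the target is to establish that the set of pairs $(x,y)$ with $x, y, xy, yx$ all in $A$ has positive density in $G \times G$; sufficient non-commutativity will then deliver a non-commuting witness.

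The first main ingredient is a notion of \emph{quasirandom coloring} — a coloring for which each color class, viewed through $f_A = 1_A - \alpha$, has vanishing averaged correlation with any ``structured'' function, in the spirit of Gowers' quasirandom groups but intrinsic to the coloring. A direct expansion with Cauchy--Schwarz, modelled on the mixing arguments for quasirandom groups, should give
$$\mathbb{E}_{(x,y)\in F_n\times F_n}\bigl[1_A(x)1_A(y)1_A(xy)1_A(yx)\bigr] = \alpha^4 + o(1)$$
for any quasirandom coloring, the error coming from multilinear averages in $f_A$ that are killed by the quasirandomness hypothesis. Finite colorings of Gowers quasirandom groups automatically satisfy the hypothesis, so the conjecture is immediate there.

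The second ingredient — which I expect to be the main obstacle — is a structural dichotomy reducing an arbitrary coloring to the quasirandom case. If $c$ is not quasirandom, some color class correlates with a structured function, which I would interpret as a lift of a coloring on a proper quotient or on cosets of a finite-index subgroup; iterating under a monotone ``complexity'' invariant of the coloring should terminate in either a quasirandom coloring (where the count above gives many monochromatic quadruples) or a fully structured one (where a pattern is exhibited by direct pigeonhole on cosets). Making this precise in the non-commutative amenable setting without abelian Fourier analysis — and coordinating it with the choice of Følner sequence so that the density computation survives — is the delicate technical point.

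Finally, to upgrade monochromatic quadruples to \emph{non-commuting} ones, observe that for each $x$ the set of $y$ with $xy = yx$ is the centralizer $C_G(x)$, which by hypothesis has infinite index; choosing the Følner sequence so that infinite-index subgroups are negligible (e.g.\ a tempered one), this forces $\bar d(C_G(x)) = 0$ for every $x$, and a Fubini argument then gives density zero for the commuting pairs in $G\times G$. Subtracting from the positive-density count of monochromatic quadruples yields pairs with $xy \neq yx$, completing the proof of the conjecture for amenable groups.
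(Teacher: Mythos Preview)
First, note that \Cref{conj1} is stated as an open conjecture; the paper establishes only the amenable case via \Cref{thm:main}, which is also what you target, so the relevant comparison is with the proof of \Cref{thm:main}.

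Your approach has a genuine gap in the structured half of the dichotomy. You fix a single color class $A$ of density $\alpha$ and aim for the count $\mathbb{E}_{x,y}[1_A(x)1_A(y)1_A(xy)1_A(yx)] \approx \alpha^4$, but Schur-type statements are not density statements: a non-identity coset of a normal subgroup of index $3$ has density $1/3$ yet contains no triple $\{x,y,xy\}$ whatsoever. Your $\alpha^4$ count may indeed hold when $1_A - \alpha$ is Gowers-uncorrelated, but when it is not you propose to interpret the correlation as a lift from a quotient and iterate. This is the missing idea: in a non-abelian amenable group there is no Fourier decomposition identifying ``structured'' with ``factors through a quotient or finite-index subgroup'', you name no complexity invariant that actually decreases, and the terminal ``fully structured'' case cannot be dispatched by ``direct pigeonhole on cosets'' on a single class, as the coset example above already shows. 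You flag this step as the delicate one, but the proposal contains no mechanism for it.

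The paper's route is combinatorial rather than analytic and differs in both ingredients. Its quasirandomness hypothesis on a coloring is that every positive-measure class is a large $\IP$ set; the quasirandom case (\Cref{proof:quasi}) is handled not by Cauchy--Schwarz counting on one class but by a color-focusing argument along color-switching trees that uses \emph{all} color classes simultaneously to control the extra term $y$. The reduction (\Cref{tec}) then shows that if the union of the non-$\IP$ classes is itself not large $\IP$, running the same focusing argument forces the pattern into one of the remaining classes --- no quotient, no descent, no complexity measure. Your final step, discarding commuting pairs via zero density of centralizers, is correct and matches what the paper's ``many'' formulation gives.
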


\vspace{1mm}

\begin{question}[\cite{bergelson1998recurrence}, the paragraph before Theorem 3.4]\label{conj2}

      Suppose that a sufficiently non-commutative group is finitely colored.  Is there a sequence $x_1,...,x_k$ with all possible products (in any order) monochromatic and distinct?
\end{question}

\vspace{1mm}

While the above conjectures are still quite open for general groups (and the free group on two generators in particular), a number of special cases have been closely analyzed and resolved.  Namely, Bergelson and McCutcheon \cite{bergelson1998recurrence,bergelson2007central} were able to prove Conjecture \ref{conj1} for colorings of infinite amenable groups.  For finite groups (which are always amenable), Sanders \cite{sanders2019schur} later proved an analogous result, extending work of Bergelson and Tao \cite{bergelson2014multiple}, who had proven a stronger density result in the more restrictive setting of finite quasirandom groups.  Similar results to Bergelson and Tao's with good quantitative bounds were also obtained by Austin \cite{austin2015quantitative} and Tserunyan \cite{tserunyan2023mixing}.  For Question \ref{conj2}, there appear to be no special cases where even the $k=3$ case is fully known, with the main progress so far due to Bergelson, Christopherson, Donaldson, and Zorin-Kranich, who proved a density version of a weaker form the conjecture for infinite amenable quasirandom groups \cite{bergelson2016finite}. 

A peculiar aspect shared by all of the previously mentioned papers is that they deduce their coloring results from a `non-commutative Roth theorem,' i.e., a density results which controls the pattern $\{x,xy,yx\}.$ Moreover, it appears that there was no previously known direct `coloring' proof of any special case of Conjecture \ref{conj1}.  This is in contrast with most classical results in Ramsey theory, where coloring results are typically much easier than and known well in advance of their density counterparts.  It has also served as a major bottleneck in extending progress on Conjecture \ref{conj1} and Question \ref{conj2} beyond the $k=2$ case (where non-commutative measure recurrence results seem to quickly become elusive) and the setting of amenable groups (where density stops being a viable tool altogether). 

In this paper we give a direct coloring proof of the amenable case of Conjecture \ref{conj1} which extends to handle some of the previously unknown sub-cases of Question \ref{conj2}.    

\begin{theorem}\label{thm:main}
    Suppose that $G$ is an amenable group with $G$-invariant finitely additive probability measure $\mu.$  In any $r$-coloring of $G$ we have $$\mu(\{y: \mu(\{x: \{x,y,xy,yx\} \textnormal{ is monochromatic}\})>0\})>0.$$

    More generally, for any $k$ we can find many monochromatic sets of the form $$\{x_i\cdot x_{i+1}\cdot...\cdot x_j, \overrightarrow{\prod_{a\in I}}x_a\cdot x_0\cdot x_1\cdot...\cdot x_j: i\leq j\in \{0,...,k\}, I\subset \{j+1,...,k\}\}.$$
\end{theorem}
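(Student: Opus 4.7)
The plan is to introduce a notion of \emph{quasirandom coloring}, verify the theorem directly for such colorings, and then reduce the general case to this one. A coloring $c \colon G \to \{1,\dots,r\}$ should be called quasirandom when, for each color class $A_i$, certain correlation integrals involving indicators of $A_i$ and their left/right translates approximately factor. For the pattern $\{x,y,xy,yx\}$ the natural condition is that
\[
\int \Bigl(\int 1_{A_i}(xy)\,1_{A_i}(yx)\,d\mu(y) - \mu(A_i)^2\Bigr)^{\!2} d\mu(x)
\]
is small for every $i$, i.e.\ that $y\mapsto 1_{A_i}(xy)$ and $y\mapsto 1_{A_i}(yx)$ are approximately uncorrelated with each other and with the constants uniformly in $x$. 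On a quasirandom group this is automatic from Bergelson--Tao style mixing; the point is to formulate an analogous condition that still makes sense (and is usable) for arbitrary amenable groups, without any representation-theoretic hypothesis on $G$.

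Once the definition is in place, the $\{x,y,xy,yx\}$ case follows from a direct averaging argument: summing over colors,
\[
\sum_i \int\!\int 1_{A_i}(x)\,1_{A_i}(y)\,1_{A_i}(xy)\,1_{A_i}(yx)\,d\mu(x)\,d\mu(y)
\]
is close to $\sum_i \mu(A_i)^4$ by quasirandomness, and the latter is strictly positive by convexity since $\sum_i \mu(A_i)=1$. Fubini then yields the stated double-positivity conclusion. For general $k$ I would proceed by induction on $k$ with a suitably strengthened, recursively defined, higher-order quasirandomness condition: having fixed $x_k$, the induction hypothesis furnishes $x_0,\dots,x_{k-1}$ making the smaller pattern monochromatic, while the extra higher-order condition handles the new products involving $x_k$ and allows one to conclude that a positive-measure set of $x_k$ work.

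The reduction from general colorings to quasirandom ones would be carried out by a density-increment argument. If quasirandomness fails, then some color class $A_i$ correlates non-trivially with a structured function built from translates of the color classes, for a positive-measure set of witness parameters. Averaging and Cauchy--Schwarz should convert this failure into a genuine density increment for some color class on a definable sub-structure (a coset, the fiber of a measurable homomorphism, or a suitable translate of another color class). Since total mass is bounded, iterating this step must terminate at a quasirandom coloring. The main obstacle I anticipate is engineering the reduction so that the refined sub-structure is compatible with the pattern being sought, i.e.\ so that a monochromatic copy found after refining lifts to a monochromatic copy in $G$, and then threading this compatibility through the higher-order induction used for the $k$-variable version. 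Designing a combinatorial substitute for Fourier analysis that works uniformly in the non-abelian amenable setting is, I expect, the essential novelty of the argument.
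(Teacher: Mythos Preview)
Your proposal diverges substantially from the paper's approach, and the reduction step contains a genuine gap.

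Your notion of quasirandomness is analytic --- an approximate factorisation of correlation integrals --- whereas the paper's is purely combinatorial: a coloring is called quasirandom if every positive-measure color class is a \emph{large $\IP$ set}, meaning that for each $k$ there are many tuples $(x_0,\dots,x_k)$ with $FP(x_k,\dots,x_0)$ contained in that class. This difference matters because the paper never computes a double integral: for a finitely additive $\mu$ there is no Fubini theorem, so your expression $\sum_i\iint 1_{A_i}(x)1_{A_i}(y)1_{A_i}(xy)1_{A_i}(yx)\,d\mu\,d\mu$ is not obviously well-defined, let alone close to $\sum_i\mu(A_i)^4$ (and even formally, the pairwise decorrelation you postulate does not by itself force the four-fold product to factor). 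The paper instead handles the quasirandom case by color focusing: it builds \emph{color switching trees} --- finite product trees whose branches live in prescribed color classes and along which colors of products are controlled --- and then iterates a Poincar\'e-type recurrence lemma to produce a descending chain $A_0\supset A_1\supset\cdots$ of positive-measure sets together with auxiliary data $y_i$ and a function $f$, finishing by pigeonhole on $f$. No moment computation appears.

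The more serious problem is your reduction. A density-increment strategy needs, when quasirandomness fails, a concrete substructure on which some color gains density, and that substructure must be closed enough under both left and right multiplication that the pattern survives and the argument can be iterated. You list ``a coset, the fiber of a measurable homomorphism, or a suitable translate of another color class'' as candidates, but an amenable group may be simple (so no nontrivial homomorphisms or proper cosets), and translates of color classes do not nest, so there is no bounded iteration. You acknowledge this as the essential difficulty, and indeed the paper does something entirely different. It first proves that large $\IP$ sets are partition regular; then, bundling all non-$\IP$ color classes into a single ``bad'' set $C_0$, it shows that either $C_0$ is itself a large $\IP$ set (contradicting partition regularity) or the desired configuration already lies in one of the remaining colors. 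The latter alternative is established by rerunning the color-focusing/tree argument relative to $C_0$, using the failure of $C_0$ to be large $\IP$ as a termination condition that forces branches of the tree into the good colors. There is no density increment and no passage to a substructure of $G$; the ``combinatorial substitute for Fourier analysis'' you anticipate is precisely this $\IP$ machinery.
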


Note that one wishes to find a monochromatic set of size $15$ in the $k=3$ case of Question \ref{conj2}, corresponding to all products in any order of terms $x,y,z.$  In this case, the second part of Theorem \ref{thm:main} finds a monochromatic set of size $10,$ with the missing terms being $xz,zy,xzy,yxz,zyx.$

Up to keeping track of extra parameters, one can also obtain lower bounds on the size of the sets in Theorem \ref{thm:main}.  The bounds this would give are fairly poor and most likely far from optimal, so we will omit the details for ease of exposition.


Using a more basic form of the same idea, we also make the following progress on the non-amenable case of Conjecture \ref{conj1}.

\begin{prop}\label{prop: f2}
    In any finite coloring of a group we have $$\{y: \{x: \{x,xy,yx\} \textnormal{ is monochromatic}\} \textnormal{ is piecewise syndetic} \}$$
    is piecewise syndetic.
\end{prop}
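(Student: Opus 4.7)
The plan is to use minimal idempotent ultrafilters in $\beta G$ to realize the pattern $\{x,xy,yx\}$ monochromatically, combining the standard Hindman-type argument (for the $yx\in C$ clause) with a symmetric dual application (for the $xy\in C$ clause).

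First I would select a piecewise syndetic color class $C\subseteq G$, which exists by partition regularity of piecewise syndeticity, and fix a minimal idempotent $p$ in the smallest two-sided ideal of the compact right-topological semigroup $(\beta G,\cdot)$ with $C\in p$. The classical Hindman identity derived from $p\cdot p=p$ then gives $B_1:=\{y:y^{-1}C\in p\}\in p$, so $B_1$ is piecewise syndetic; moreover, for each $y\in B_1$ the set $C\cap y^{-1}C$ is in $p$ and equals exactly $\{x:x\in C,\ yx\in C\}$, hence is piecewise syndetic.

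To simultaneously enforce $xy\in C$, I would apply the analogous identity in the ultrafilter semigroup coming from the opposite group $G^{\mathrm{op}}$: the operation $A\in p\cdot_{\mathrm{op}} q\Leftrightarrow\{y:Ay^{-1}\in q\}\in p$ is again a compact right-topological multiplication on $\beta G$, and if $p$ is also idempotent for $\cdot_{\mathrm{op}}$ then $B_2:=\{y:Cy^{-1}\in p\}\in p$. Taking $B:=B_1\cap B_2\in p$, which is piecewise syndetic, we obtain for each $y\in B$ that $C\cap y^{-1}C\cap Cy^{-1}=\{x:x,xy,yx\in C\}$ lies in $p$ and is therefore piecewise syndetic, which is the conclusion of the proposition.

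The main obstacle will be securing a single $p$ that is idempotent for both multiplications on $\beta G$: although both $(\beta G,\cdot)$ and $(\beta G,\cdot_{\mathrm{op}})$ admit minimal idempotents by Ellis's theorem, and they are in fact anti-isomorphic via the inversion map $g\mapsto g^{-1}$, a minimal idempotent for one operation need not be idempotent for the other since they differ on non-principal ultrafilters. I would address this by an Ellis--Numakura-type fixed-point argument applied to the closed set of common idempotents inside $\overline{C}$, exploiting minimality inside the intersection of the two ideal systems to extract a two-sided idempotent in the appropriate color-class neighborhood. This common-idempotent construction is, I expect, the ``more basic form'' of the quasirandomness-based reduction that the paper develops for the amenable setting of its main theorem.
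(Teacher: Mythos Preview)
Your proposal has a genuine gap at exactly the point you flag: the existence of a single ultrafilter $p\in\beta G$ that is simultaneously idempotent for the ordinary convolution $\cdot$ and for the opposite convolution $\cdot_{\mathrm{op}}$. The Ellis--Numakura argument you gesture at cannot be made to work here, because the set of idempotents for one operation is not closed in $\beta G$, nor is it a subsemigroup for the other operation, so there is no closed subsemigroup on which to run the fixed-point lemma. Minimality in the two-sided ideal of $(\beta G,\cdot)$ does not help either: the smallest ideals for $\cdot$ and $\cdot_{\mathrm{op}}$ coincide as sets, but an idempotent for one need not be an idempotent for the other, and there is no known mechanism to intersect the two idempotent structures inside a prescribed $\overline{C}$. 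In short, the step ``find $p$ with $p\cdot p=p=p\cdot_{\mathrm{op}}p$ and $C\in p$'' is doing all the work and is not justified; without it, you only obtain $\{x,yx\}$ (or $\{x,xy\}$) monochromatic, not both simultaneously.

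The paper's proof avoids this obstruction entirely and is purely combinatorial, with no ultrafilters. It first proves a ``piecewise syndetic pigeonhole principle'': if $A$ is piecewise syndetic then $\{g:g^{-1}A\cap A\text{ is piecewise syndetic}\}$ is syndetic. It then runs a standard color focusing iteration: starting from a piecewise syndetic color class $A_0=C_0$, at each stage it finds a syndetic set $S_i$ of step sizes $y$ with $y^{-1}A_i\cap A_i$ piecewise syndetic, pigeonholes on the color of the right-shifted set $A_i(y)\,y_0\cdots y_{i-1}y$, and records that color as $f(i)$. After at most $r+1$ steps two values $f(i)=f(j)$ collide, and unwinding the nested inclusions yields piecewise syndetically many $y$ and, for each, piecewise syndetically many $x$ with $\{x,xy,yx\}$ monochromatic. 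Your final remark is also off: this proposition is not handled by any quasirandomness reduction; that machinery is reserved for the amenable main theorem, while here the paper deliberately gives an elementary color focusing proof precisely to avoid the ultrafilter route.
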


A similar result was proven by Bergelson and Hindman in \cite{bergelson1992some} using idempotent ultrafilters.  The argument we give here appears to be the first combinatorial proof of this fact and primarily relies on a `piecewise syndetic pigeonhole principle' (see Proposition \ref{pws php})  which says that any piecewise syndetic subset of an arbitrary group contains `many' sets of the form $\{x,xy\}.$  Unfortunately the notion of `many' that appears here is less robust than appears in the `density pigeonhole principle' which applies to amenable groups (see Lemma \ref{php}), and this is the main obstacle in extending our argument to control the color of $y$ as well.

\subsection{Techniques}

\hspace{3mm}

The basic idea behind the proof of Theorem \ref{thm:main} was inspired by the author and Sabok's recent work \cite{bowen2022monochromatic,bowen.sabok} on finding monochromatic sets of the form $\{x,y,xy,x+y\}$ in colorings of $\mathbb{N}$ and $\mathbb{Q},$ although the execution of the idea is quite different in the present paper.  The starting point of these results is that it is fairly easy, using standard dynamical color focusing arguments, to prove a weaker result controlling the color of all terms excluding $y.$  From here one then argues that if the coloring is `sufficiently random' then this color focusing argument can be upgraded to also control the color of $y.$  Otherwise, the coloring must be fairly structured and the desired conclusion can be deduced by exploiting this structure.

The primary difference between the arguments in the present paper and those in \cite{bowen2022monochromatic,bowen.sabok} comes from which definition of `sufficiently random' is useful.  In \cite{bowen2022monochromatic,bowen.sabok} the useful  structure vs randomness dichotomy came from the dichotomy between thick and syndetic sets.  While the thick case of the arguments from those papers is applicable to the present problem, it seems that syndeticity is a much less useful condition in the non commutative setting.

Instead, the notion of randomness that we develop in the present paper is related to the properties of colorings of what are known as \textbf{quasirandom} groups in the finite setting \cite{gowers2008quasirandom} and \textbf{weak mixing} \cite{bergelson2009wm} or \textbf{minimally almost periodic} \cite{neumann1934almost} groups in the infinite setting.  Gowers in the finite setting \cite{gowers2008quasirandom} and Bergelson and Furstenburg in the infinite setting \cite{bergelson2009wm} showed that, when amenable, such groups behave psudorandomly in the sense that that all of their large subsets contain roughly the expected number of finite product sets of a given length, and moreover, that it is easy to `switch' between any large sets via multiplication in the sense that if $A,B\subset G$ then there are roughly the expected number of $a\in A$ and $b\in B$ with $ab\in A.$

Inspired by this, we will call a coloring of an arbitrary amenable group \textbf{quasirandom} if every color class has roughly the expected number of finite product sets of each length (see Section \ref{sect:quasi.color} for precise definitions) and deduce that in such colorings it is very easy to switch between color classes via multiplication in a sense similar to before.  It turns out this latter property is precisely what is needed to upgrade the color focusing proof that deals with the configuration $\{x,xy,yx\}$ to also control the color of the $y$ term.  This is explained in Section \ref{sect:quasi}, where we prove Theorem \ref{thm:main} for quasirandom colorings of amenable groups (and, consequently, arbitrary colorings of amenable quasirandom groups).  

From here we then need to reduce the proof of Theorem \ref{thm:main} to the case that the coloring is quasirandom.  This is done in Section \ref{sect:reduct}, where we show that in an arbitrary coloring $c$ of an amenable group $G$ there is a $D\subset G$ such that $c$ is a quasirandom coloring of $G\setminus D$ and $G\setminus D$ is invariant enough under multiplication to run the proof from Section \ref{sect:quasi}.

\section{Notation and preliminaries}

Throughout the paper, by $FP(x_0,...,x_k)$ we mean the set of all non-repeating finite products of $x_0,...,x_k,$ with products written from left to right in the order of the given indices.  For example, $FP(x_0,x_1)=\{x_0,x_1,x_0x_1\},$ while $FP(x_1,x_0)=\{x_1,x_0,x_1x_0\}.$ 

If $S=\{s_1,...,s_n\},$ by $FP(S)$ we mean $FP(s_1,...,s_n).$  We will only use this notation in cases where the identity of the elements $s_1,...,s_n$ is not important, and typically not specify anything about $S$ besides its size.

By an $IP_n$ set we mean a set of the form $FP(S)$ for some $S\subset G$ with $|S|=n$.  Recall that the finite unions theorem implies that for any $r,n\in \mathbb{N}$ there is a $n'\in \mathbb{N}$ such that any $r$-coloring of an $IP_{n'}$ set contains a monochromatic $IP_n$ subset.

We will also use left and right versions of the $\prod$ notation.  Namely, if $F\subset \{1,...,n\}$ then by $\prodr_{i\in F}x_i$ we mean $x_{i_1}\cdot ...\cdot x_{i_{|F|}},$ where $i_1<i_2<...<i_{|F|},$ and the reverse for $\prodl.$

\subsection{Amenability and recurrence}

\hspace{3mm}

Throughout this paper, we say a group is \textbf{amenable} if it admits a finitely additive propability measure $\mu$ that is $G$-invariant, i.e., $\mu(A)=\mu(gA)=\mu(Ag)$ for all $A\subset G$ and $g\in G.$  Our arguments will in fact work for amenable semigroups, although we do not emphasize this.  If one prefers to avoid the use of such measures (whose existence in infinite groups can depend on the axiom of choice), then up to passing to appropriate F\o lner sub-sequences at various points of the argument one could replace the use of the measures $\mu$ with densities along two sided F\o lner sequences.   

The following `density pigeonhole principle' will be the primary tool that allows our proof to work for amenable groups and is just an elementary variant of the Poincare recurrence theorem.  A weaker analogue holds when replacing the positive measure set with a piecewise syndetic set in a non-amenable group, but that weaker variant is only enough to find monochromatic sets of the form $\{x,xy,yx\},$ see Proposition \ref{pws php}.

\begin{lemma}\label{php}
    Suppose that $\mu(A)>\ep$ and $(y_0,...,y_k)$ is a sequence with $k$ large enough depending on $\mu(A)$.  There are $i<j$ such that $y=y_i\cdot y_{i+1}\cdot...\cdot y_j$ satisfies $\mu(A\cap Ay^{-1})>\ep^2/2.$  Similarly, there are $i'<j'$ such that $y'=y_{i'}\cdot...\cdot y_{j'}$ satisfies $\mu(y^{-1}A\cap A)>\ep^2/2.$
\end{lemma}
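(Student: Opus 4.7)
The plan is to apply a second-moment (Cauchy--Schwarz) pigeonhole to a family of $k+2$ translates of $A$. Set $z_{-1} = e$ and $z_j = y_0 y_1 \cdots y_j$ for $0 \leq j \leq k$, so that every consecutive product $y_a y_{a+1} \cdots y_b$ with $0 \leq a \leq b \leq k$ appears exactly as $z_{a-1}^{-1} z_b$.

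For the first claim I use right translates. By right-invariance of $\mu$, each of the $k+2$ sets $A z_j^{-1}$ has measure $> \ep$, so setting $f = \sum_{j=-1}^{k} \mathbf{1}_{A z_j^{-1}}$ gives $\int f\, d\mu > (k+2)\ep$, and Cauchy--Schwarz yields
\[
\sum_{-1 \leq i,j \leq k} \mu(A z_i^{-1} \cap A z_j^{-1}) = \int f^2\, d\mu \geq (k+2)^2 \ep^2.
\]
The diagonal contributes at most $k+2$, so averaging over the $(k+2)(k+1)$ off-diagonal pairs produces some $-1 \leq i < j \leq k$ with $\mu(A z_i^{-1} \cap A z_j^{-1}) > \ep^2/2$, provided $k$ is of order $1/\ep^2$. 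Right-translating by $z_i$ turns this into $\mu(A \cap A z_j^{-1} z_i) > \ep^2/2$, which is exactly $\mu(A \cap A y^{-1}) > \ep^2/2$ for $y = z_i^{-1} z_j = y_{i+1} \cdots y_j$, a valid consecutive product with both endpoints in $\{0, \ldots, k\}$.

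For the second claim I run the identical argument with the left translates $z_j A$ in place of $A z_j^{-1}$, using left-invariance of $\mu$ in place of right-invariance. The same Cauchy--Schwarz step locates $i' < j'$ with $\mu(z_{i'} A \cap z_{j'} A) > \ep^2/2$; left-translating by $z_{i'}^{-1}$ then gives $\mu(A \cap z_{i'}^{-1} z_{j'} A) > \ep^2/2$, which rewrites as $\mu(y'^{-1} A \cap A) > \ep^2/2$ for $y' = z_{i'}^{-1} z_{j'} = y_{i'+1} \cdots y_{j'}$.

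I do not expect a serious obstacle here: the argument is just Poincar\'e recurrence dressed up as a pigeonhole on $O(\ep^{-2})$ translates, and the only bookkeeping subtlety is including $z_{-1} = e$ among the partial products so that $y$ is always a product of the originally indexed $y_a$'s, together with remembering to use right- versus left-invariance of $\mu$ to match whether the translate appearing in the conclusion is on the right or the left of $A$.
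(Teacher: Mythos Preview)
Your argument is correct and is essentially the same as the paper's: both take the translates of $A$ by the inverses of the partial products $y_0\cdots y_{i}$ and use a pigeonhole to find two with intersection larger than $\ep^2/2$, then translate back. The paper simply asserts the existence of such a pair once $k$ is large enough, while you spell out the Cauchy--Schwarz / second-moment justification for that step; this is extra detail rather than a different approach.
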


This lemma is well known, and its proof can be found in Bergelson's survey article \cite{bergelson2000multifarious} as Proposition 2.2.  We repeat it here for convenience.

\begin{proof}
    We will only prove the version with right multiplication, as the version with left multiplication follows from the analagous argument.  Let  $A_0=A$ and $A_i=A\cdot (y_0\cdot...\cdot y_{i-1})^{-1}$ for $0<i\leq k+1$.  So long as $k$ is large enough, there are $i<j\leq k$ with $\mu(A_i\cap A_j)>\ep^2/2.$  Note for $a\in A_i\cap A_j,$ we have $a\cdot y_0\cdot...\cdot y_{i-1}\in A$ and $a\cdot y_0\cdot...\cdot y_{j-1}\in A,$ and so for $y=y_i\cdot...\cdot y_{j-1}$ we have $\mu(A\cap Ay^{-1})>\ep^2/2.$  
\end{proof}

We will also need the following iterated version of the above fact, which follows immediately from repeated applications of Lemma \ref{php}.  The $r=1$ case is also recorded in \cite{bergelson2000multifarious} as Proposition 2.5.

\begin{lemma}\label{php2}
    Suppose that $\mu(A)>\ep$ and that $k\in \mathbb{N}.$  There is a $k'\in\mathbb{N}$ depending only on $k$ and $\ep$ such that if $S_1',...,S_r'\subset G$ with $|S_i'|>k',$ there are $S_1\subset FP(S_1'),...,S_r\subset FP(S_r')$ such with $|S_i|=k$ and a set $B\subset A$ with $\mu(B)>(\ep^2/2)^{rk}$ such that $s_iB\subset A$ for any $s_i\in FP(S_i).$
\end{lemma}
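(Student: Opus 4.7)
The plan is to iterate \cref{php} a total of $rk$ times, processing the sequences $S_1',\ldots,S_r'$ one after another and picking the $k$ elements of each $S_i$ one at a time from disjoint consecutive blocks along $S_i'$.

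Set $B_0:=A$. To build $S_1=\{s_1^{(1)},\ldots,s_k^{(1)}\}$, first apply \cref{php} to $B_0$ and an initial portion of $S_1'$ to obtain an element $s_1^{(1)}\in FP(S_1')$ with $\mu(B_0\cap (s_1^{(1)})^{-1}B_0)>\mu(B_0)^2/2$, and set $B_1:=B_0\cap (s_1^{(1)})^{-1}B_0$. Next, apply \cref{php} to $B_1$ and the tail of $S_1'$ after the block used for $s_1^{(1)}$ to produce $s_2^{(1)}$ and $B_2:=B_1\cap (s_2^{(1)})^{-1}B_1$, and so on. After $k$ rounds one has a nested chain $A=B_0\supset B_1\supset\cdots\supset B_k$ satisfying $s_j^{(1)} B_j\subset B_{j-1}$ for each $j$, where each $s_j^{(1)}$ is a product of a contiguous block of $S_1'$ strictly later than all previous blocks; in particular, every product of a subset of $\{s_1^{(1)},\ldots,s_k^{(1)}\}$ taken in the natural order lies in $FP(S_1')$.

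A short telescoping argument then shows $s_{\ell_1}^{(1)}\cdots s_{\ell_m}^{(1)} B_k\subset A$ for any $\ell_1<\cdots<\ell_m$ in $\{1,\ldots,k\}$: since $B_k\subset B_{\ell_m}$, the rightmost factor sends $B_k$ into $B_{\ell_m-1}\subset B_{\ell_{m-1}}$ (using $\ell_m-1\geq\ell_{m-1}$), the next factor sends this into $B_{\ell_{m-1}-1}$, and so on down to $B_{\ell_1-1}\subset B_0=A$. One now repeats the whole construction with $S_2'$ in place of $S_1'$ and $B_k$ in place of $A$, and iterates through all $r$ sequences, arriving at the required $S_1,\ldots,S_r$ together with a final $B$ which, by monotonicity of the chain of $B$'s, satisfies $sB\subset A$ for every $s\in FP(S_i)$ and every $i$. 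The measure bound follows by tracking the effect of the $rk$ applications of \cref{php}, each of which squares and halves the previous lower bound.

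The only real difficulty is the bookkeeping: one must choose the required length $k'$ of each $S_i'$ large enough for \cref{php} to apply at every one of the $rk$ iterations, where the set being preserved has already been passed through many squarings and so has quite small measure. This is purely quantitative, and the combinatorial backbone -- maintaining the invariant $s_j^{(i)}B_j^{(i)}\subset B_{j-1}^{(i)}$ and exploiting the monotonicity of the $B_j^{(i)}$'s to simultaneously control every element of $FP(S_i)$ -- is the only conceptual ingredient.
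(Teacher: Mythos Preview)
Your proposal is correct and is exactly the approach the paper indicates: the paper simply states that the lemma ``follows immediately from repeated applications of Lemma~\ref{php},'' and your write-up spells out precisely that iteration, including the nesting $s_j B_j\subset B_{j-1}$ and the telescoping that controls all of $FP(S_i)$. One small remark: the explicit bound $(\ep^2/2)^{rk}$ in the statement is looser than what the square-and-halve iteration actually yields (after $rk$ steps one gets roughly $\ep^{2^{rk}}/2^{2^{rk}-1}$), but this discrepancy is in the paper's statement, not in your argument, and only the existence of a positive lower bound depending on $\ep,k,r$ is ever used.
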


\subsection{`Many' tuples}\label{sec:many}

\hspace{3mm}

In order for our inductive arguments to go through we will often need to find `many' tuples with a given property.  We introduce the following notation to make this precise.

\begin{definition}

    Given a property $P:G^k\rightarrow \{0,1\}$ we say that there are \textbf{many} tuples $(x_1,...,x_k)\in G^k$ satisfying $P$ if $$\mu(\{x_1\in G: \mu(\{x_2\in G:...: \mu(\{x_k\in G: P(x_1,...,x_k)=1\})>0\}....\})>0\})>0.$$

    If there are many tuples $(x_1,...,x_k)$ with property $P,$ we say that $(x_1,...,x_i)$ is a \textbf{valid choice} for $P$ if there are many $(x_{i+1},...,x_k)$ with $P(x_1,...,x_k)=1.$

    If there are many tuples $(x_1,...,x_k)$ with property $P$ and $(x_1,...,x_i)$ is a valid choice for $P$, we write $$X_{i+1}(x_1,...,x_i)=\{x_{i+1}\in G: (x_1,...,x_{i+1}) \textnormal{ is a valid choice for } P\}.$$

\end{definition}

For example, we will say that a set $A$ contains many sets of the form $\{x_0,x_1,x_0x_1\}$ if $\mu(\{x_0\in G: \mu(\{x_1\in G: x_0,x_1,x_0x_1\in A\})>0\})>0.$  In this case, $x_0$ is a valid choice with this property if $\mu(\{x_1: x_0,x_1,x_0x_1\in A\})>0,$ and given a valid choice of $x_0$ we have $X_1(x_0)=\{x_1: x_0,x_1,x_0x_1\in A\}.$

\subsection{Finite product Trees} \label{sect:tree}

\hspace{3mm}

In this subsection we introduce finite product trees, which will be the main objects along which we induct throughout the paper.  The picture to have in mind is of a rooted tree whose vertices are labeled by elements of $G,$ and where paths through the tree correspond to multiplying by the label of each each vertex along the path.

A \textbf{finite product tree} $T$ of \textbf{height} $m$ consists of a \textbf{root} $v\in G$ together with a sequence of \textbf{children} $T(v_0),T(v_0,v_1),...,T(v_0,...,v_{m-1})\subset G,$ where $v_i\in T(v_0,...,v_{i-1}).$  If $|T(v_0,...,v_i)|=n$ for all $i<m$ then we say that $T$ has \textbf{branching} $n.$

A \textbf{path} in $T$ consists of a vertex $v_i\in T$ together with a consecutive sequence (starting from $i$) of $v_j\in T(v_i,...,v_{j-1})$.  We denote by $P(T)$ the set of all paths through $T$ that start from the root $v_0,$ and identify a path $v_0,...,v_n\in P(T)$ with the element $v_0\cdot...\cdot v_n\in G$.  

We can \textbf{concatenate} a path $v_i,...,v_j\in T,$ forming a new tree $T',$ by replacing the sequences $v_i,...,v_j$ with a single vertex $v'=v_i\cdot...\cdot v_j.$  The children of $v'\in T'$ are the children $T(v_0,...,v_i,...,v_j)\subset T.$

A \textbf{subtree} of $T$ is any finite product tree that can be obtained from $T$ by removing or concatenating paths.  

The following is a technical lemma which we will need in Section \ref{sect:reduct}.  The point is that if we finitely color the paths in a finite product tree so that almost all paths have color $0,$ then it is easy to pass to a well behaved subtree all of whose paths are monochromatic in color $0.$  The only difficulty in the proof is the required notation.

\begin{lemma}\label{Baire}
    Suppose that $T$ is a finite product tree of height at least $((r+1)(k+1)|T(v_0)|+1)m$ and branching $n$ such that $T(v_0,v_1,...,v_i)$ does not depend on $v_1.$  Suppose that $c:\bigcup_{P\in P(T)}P\rightarrow \{0,...,r\}$ is a coloring such that:
    \begin{enumerate}
        \item $c(v_0)=0.$
        \item If there are paths $P_0=v_0,...,v_{i_1},$ $P_1=v_{i_1+1},...,v_{i_2},...$ $P_{k}=v_{i_{k}+1},...,v_{i_{k+1}}$ with $c(P_0,...,P_i)=a\neq 0$ for all $i\leq k,$ then for all paths $P'=v_{i_{k+1}+1},...,v_j$  we have $c(P_0,...,P_k,P')\neq a.$ 
    \end{enumerate}
    
    Then there are $v_2,...,v_i$ with $v_j\in T(v_0,v_1,...,v_{j-1})$  such that all paths in the subtree obtained by concatenating the paths $v_1,v_2...,v_i$ for all $v_1\in T(v_0)$ and truncating this tree at height $m$ are monochromatic in color $0.$
\end{lemma}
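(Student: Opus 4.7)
My plan is to view the problem through a \emph{tuple coloring} on the shared substructure of $T$ and attack it with a depth-first search.  Set $N = |T(v_0)|$ and enumerate $T(v_0) = \{u_1, \ldots, u_N\}$.  Because $T(v_0, v_1, v_2, \ldots, v_l)$ does not depend on $v_1$, the subtree of $T$ rooted at each $u_j$ shares the same branching structure; call this shared tree $\tilde T$.  A vertex of $\tilde T$ is a sequence $\tilde y = (v_2, \ldots, v_l)$, and under the branch through $u_j$ it realizes the product $v_0 u_j v_2 \cdots v_l \in G$.  I associate to each $\tilde y$ the tuple
\[
C(\tilde y) = (c(v_0 u_j v_2 \cdots v_l))_{j=1}^{N} \in \{0, 1, \ldots, r\}^N.
\]
The lemma then reduces to finding $\tilde y^* = (v_2, \ldots, v_i) \in \tilde T$ with $C(\tilde y^*) = \vec 0$ and with every descendant of $\tilde y^*$ in $\tilde T$ at depth at most $m-1$ also having tuple $\vec 0$; the paths in the concatenated tree truncated at height $m$ are exactly $v_0$ together with the products $v_0 u_j v_2 \cdots v_i w_1 \cdots w_s$ for $s \leq m-1$, so color $0$ for all of these is precisely the tuple condition.

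The key input from hypothesis (2), specialized to paths through each branch $u_j$, is that in coordinate $j$ of $C$ each fixed nonzero color appears at most $k+1$ times along any path in $\tilde T$.  Summing over the $N$ coordinates and $r$ nonzero colors, any path in $\tilde T$ supports at most $N r (k+1)$ \emph{nonzero-coordinate events}---pairs $(\tilde w, j)$ with $C(\tilde w)_j \neq 0$.  This is the budget that drives the search.  I run a depth-first search in $\tilde T$ starting at its root: at each stage, having arrived at a vertex $\tilde x$, I examine the subtree of height $m$ strictly below $\tilde x$.  If no strict descendant of $\tilde x$ at depth $\leq m$ has nonzero tuple, then any child $\tilde y^*$ of $\tilde x$ is a valid terminal vertex (its tuple is $\vec 0$, and its descendants at depth $\leq m-1$ are among $\tilde x$'s strict descendants at depth $\leq m$, hence $\vec 0$-tuple).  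Otherwise I descend to some strict descendant $\tilde z$ with $C(\tilde z) \neq \vec 0$; each descent strictly increases the number of nonzero-coordinate events on the current path, so the process terminates within $N r(k+1)$ descents.  In the worst case the budget gets fully exhausted, forcing every nonzero color to have already occurred $k+1$ times in every coordinate; hypothesis (2) then forbids any further nonzero-coordinate events below the current vertex, and again any child works.

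Tracking depths, each descent moves down at most $m$ levels in $\tilde T$, so the final $\tilde y^*$ sits at depth at most $N r(k+1)m + 1$ in $\tilde T$, below which I need $m-1$ further levels of tree; a direct computation shows that the hypothesized height $((r+1)(k+1)N + 1)m$ of $T$ supplies both comfortably, since $(r+1)(k+1) > r(k+1)$.  As the paper forewarns, the principal obstacle is not the idea but the notational bookkeeping---keeping straight which depths correspond to which products in $T$, in $\tilde T$, and in the contracted tree, and verifying at each step of the DFS that the search-depth $m$ and the still-needed $m-1$ levels beneath $\tilde y^*$ both fit inside the available height of $T$.
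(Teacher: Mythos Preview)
Your proof is correct and follows essentially the same strategy as the paper's: both arguments walk down the shared tree (your $\tilde T$), and both use hypothesis~(2) to bound how many times one can encounter a vertex whose tuple has a nonzero coordinate before every further descendant is forced to have tuple $\vec 0$. Your tuple-coloring device and the explicit ``budget'' of at most $Nr(k+1)$ nonzero-coordinate events make the bookkeeping cleaner than in the paper, where the same bound appears as ``we can only make progress $(r+1)(k+1)|T(v_0)|$ times,'' but the underlying mechanism---advance until a nonzero color appears, absorb that step into the concatenation, and repeat until the budget is exhausted---is identical.
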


\begin{proof}
     Consider the following procedure.  For fixed $v_2,...,v_{i-1}$ either all paths of the form $v_0,v_1,...,v_{i-1},v_i$ for $v_1\in T(v_0)$ and $v_i\in T(v_0,v_1,...,v_{i-1})$ receive color $0,$ or there is some $v_1\in T(v_0)$ and $T_i\in T(v_0,v_1,...,v_{i-1})$ such that $c(v_0,v_1,...,v_{i-1},v_i)\neq 0.$  If the former case occurs $m$ times in a row then we are done.  If the latter case occurs let us say that we \textbf{made progress}.  In this case we pass to the subtree obtained from concatenating all paths $\overline{v_1},v_2,...,v_i$ for all $\overline{v_1}\in T(v_0).$  Note that we can only make progress in this way $(r+1)(k+1)|T(v_0)|$ times without violating condition (3) of the coloring, and so after a total of $((r+1)(k+1)|T(v_0)|+1)m$ iterations we have built the desired monochromatic subtree.
\end{proof}

\section{Quasirandom colorings}\label{sect:quasi.color}

In this section we introduce quasirandom colorings and describe some of their basic properties.  In the next definition and throughout one should refer to Section \ref{sec:many} for our definition of the term `many.'

\begin{definition}
    Given an amenable group $(G,\mu)$ we say that $A\subset G$ is a \textbf{large} $\IP$ set if for any $k\in \mathbb{N}$ there are many sequences $(x_0,...,x_k)$ with $FP(x_k,...,x_0)\subset A.$ 
    
    We say that a coloring of $G$ is \textbf{quasirandom} if every positive measure color class is a large $\IP$ set.
\end{definition}

\begin{remark}
    It is a consequence of \cite{bergelson2009wm} Theorem 1 that any finite coloring of a weak mixing group is quasirandom.  Up to introducing appropriate bounds, it follows from \cite{gowers2008quasirandom} Theorem 5.3 that any finite coloring of a $D$-quasirandom group is approximately quasirandom.  
\end{remark}

An easy consequence of the existence of idempotent ultrafilters is that any finite coloring of an amenable group contains a large $\IP$ color class.  Namely, if we consider the semigroup $(G,*),$ where $a*b=ba\in G$ and $p$ is an idempotent ultrafilter in the left ideal of $(\beta G,*)$ consisting of the ultrafilters $q$ with $\mu(A)>0$ for all $A\in q,$ then every element of $p$ is a large $\IP$ set.  In particular, every finite coloring of $G$ contains a color which is a large $\IP$ set. Our next lemma is an elementary variant of this fact, which, up to some setup, follows from the finite sums theorem.

\begin{lemma}\label{pr}
    Large $\IP$ sets are partition regular, i.e., if $A=A_1\cup...\cup A_r$ and $A$ is a large $\IP$ set, then some $A_i$ is a large $\IP$ set.
\end{lemma}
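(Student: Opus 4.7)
The plan is to combine the finite unions theorem with a measure pigeonhole and a change-of-variables argument exploiting the $G$-invariance of $\mu$. Fix $k \in \mathbb{N}$, and let $k'$ be large enough that every $r$-coloring of an $IP_{k'}$ set contains a monochromatic $IP_{k+1}$ subset. Since $A$ is a large $\IP$ set, many $(x_0, \ldots, x_{k'-1})$ satisfy $FP(x_{k'-1}, \ldots, x_0) \subset A$. For each such sequence the partition $A = A_1 \cup \cdots \cup A_r$ induces an $r$-coloring of the $IP_{k'}$ set $FP(x_{k'-1}, \ldots, x_0)$; applying finite unions produces disjoint blocks $I_0 > I_1 > \cdots > I_k$ of indices in $\{0, \ldots, k'-1\}$ and a color $j \in \{1, \ldots, r\}$ such that the block products $y_\ell = \prodl_{a \in I_\ell} x_a$ satisfy $FP(y_0, \ldots, y_k) \subset A_j$. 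Since there are only finitely many choices of $(I_0, \ldots, I_k, j)$, applying the standard measure pigeonhole at each level of the nested ``many'' definition lets me fix a single witness $(I_0^\ast, \ldots, I_k^\ast, j^\ast)$ for many sequences.

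The next step is to rewrite the remaining nested condition, which depends only on $(y_0^\ast, \ldots, y_k^\ast)$, as a nested condition on the $y_\ell^\ast$'s directly. Any $x_a$ not appearing in some $I_\ell^\ast$ is irrelevant to the innermost property and may be dropped (its corresponding measure factor collapses to $\mu(G) = 1$). Within each block $I_\ell^\ast = \{a_\ell, \ldots, b_\ell\}$ I factor $y_\ell^\ast = x_{b_\ell} \cdot (x_{b_\ell - 1} \cdots x_{a_\ell})$, so the substitution $x_{b_\ell} \mapsto y_\ell^\ast$ is a right translation by a factor depending only on strictly outer variables of the nesting; right $G$-invariance of $\mu$ then shows the substitution preserves the level-$b_\ell$ measure. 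Because $I_0^\ast > \cdots > I_k^\ast$ pointwise, the highest-index variable $x_{b_0}$ sits at the innermost level, so after all substitutions $y_0^\ast$ is innermost and $y_k^\ast$ outermost; under the harmless relabeling $z_\ell = y_{k-\ell}^\ast$ this reads as: many $(z_0, \ldots, z_k)$ satisfy $FP(z_k, \ldots, z_0) \subset A_{j^\ast}$.

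Thus for each $k$ some color $i_k \in \{1, \ldots, r\}$ satisfies the large-$\IP$ condition at level $k$. Since the level-$k$ condition implies the level-$(k-1)$ condition (simply drop the innermost variable, using $FP(z_{k-1}, \ldots, z_0) \subset FP(z_k, \ldots, z_0)$), a finite pigeonhole on the sequence $(i_k)_k$ gives a single $i$ that works at every level simultaneously, so $A_i$ is large $\IP$.

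The main obstacle is the change-of-variables step. Nested finitely-additive measures do not in general obey Fubini, so one cannot naively swap orders of integration. The fact that rescues the argument is that at each level the substitution is a right translation by a factor depending only on strictly outer variables, for which $G$-invariance applies one level at a time; the bookkeeping subtlety is that the original outer-to-inner order on the $x_a$'s becomes the reverse outer-to-inner order on the $y_\ell^\ast$'s, forcing the final reindexing $z_\ell = y_{k-\ell}^\ast$.
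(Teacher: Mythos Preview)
Your proof is correct and uses the same three ingredients as the paper: an inside-out pigeonhole stabilization across the nested levels, the finite unions theorem, and a change of variables from the $x_a$'s to the block products.  The only real difference is the order of the first two.  The paper first stabilizes the entire coloring pattern (so that the color of each subproduct $\prodl_{i\in F} x_i$ depends only on $F$, not on the particular $x_i$'s) and then applies finite unions once to this single coloring of index sets; you instead apply finite unions tuple-by-tuple and then stabilize the resulting block-and-color data $(I_0,\ldots,I_k,j)$.  Either ordering yields a fixed block structure for many tuples.  Your explicit treatment of the change of variables---substituting $x_{b_\ell}\mapsto y_\ell^\ast$ as a right translation by a factor depending only on strictly outer variables, and then dropping the now-irrelevant intermediate levels---actually spells out a step the paper's proof elides when it simply asserts that one obtains ``many choices of $(x'_0,\ldots,x'_{k'})$.''  The final monotonicity-plus-pigeonhole on the colors $i_k$ is likewise a detail the paper compresses into the single sentence ``Repeating this for each $k'\in\mathbb{N}$.''
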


\begin{proof}
    Fix $k'$ and let $k$ be large enough such that any $r$-coloring of an $IP_k$ set contains an $IP_{k'}$ set.  By assumption, $A$ contains many tuples $(x_0,...,x_k)$ with $FP(x_k,...,x_0)\in A.$  
    
    For a fixed valid choice of $x_0,...,x_{k-1},$ consider $X_k=X_k(x_0,...,x_{k-1}).$ This satisfies $\mu(X_k)>0$ by assumption. Consider the auxiliary coloring $c:X^k\rightarrow [r^k]$ given by the string listing the colors of elements of $FP(x_k,...,x_0)$.  By the pigeonhole principle there is an $X_k'\subset X_k$ with $\mu(X_k')>\mu(X_k)/r^{2^k}$ that is monochromatic with respect to $c,$ and so replacing $X_k(x_0,...,x_{k-1})$ with $X_{k}'$ gives many tuples $(x_0,...,x_k)$ with $FP(x_k,...,x_0)\subset A$ and so that for each $F\subset \{0,..,k-1\}$ and valid choice of  $x_0,...,x_{k-1}$ the color of $x_k\prodl_{i\in F}x_i$ depends only on $F$ and not on the choice of $x_k\in X_{k}'.$

    Now consider a fixed valid choice of $x_0,...,x_{k-2},$ and consider the auxiliary $r^{2^k}$ coloring of $X_{k-1}(x_0,...,x_{k-2})$ based on the value of $c(x_k)$ for some (and by the previous paragraph, any) $x_k\in X_k(x_0,...,x_{k-1})$.  By the pigeonhole principle we can find $X_{k-1}'\subset X_{k-1}(x_0,...,x_{k-2})$ with $\mu(X_{k-1}')\geq \mu(X_{k-1}(x_0,...,x_{k-1}))/r^{2{^k}}$  such that replacing $X_{k-1}(x_0,...,x_{k-2})$ with $X_{k-1}'$ gives many tuples $(x_0,...,x_k)$ with $FP(x_k,...,x_0)\subset A$ and so that for each $F\subset \{0,..,k\}$ and valid choice of  $x_0,...,x_{k-2}$ the color of $\prodl_{i\in F}x_i$ depends only on $F$ and not on the choice of $x_{k-1}\in X_{k-1}'$ or $x_k\in X_{k}(x_0,..,x_{k-1}).$

    Continuing in this way, we find many tuples $(x_0,...,x_k)$ such that $FP(x_k,...,x_0)\in A$ and for each $F\subseteq \{0,...,k\}$ and valid choice of $x_0,...,x_k,$ the color of $\prodl_{i\in F}x_i$ depends only on $F$ and not the choice of $x_0,...,x_k.$  Finally, applying the finite sums theorem, we find some $i$ and many choices of $(x'_0,...,x'_{k'})$ such that $FP(x'_{k'},...,x'_{0})\in A_i.$  
    
    Repeating this for each $k'\in \mathbb{N}$ gives the desired large $\IP$ color class.
\end{proof}

\subsection{Mixing in quasirandom colorings}

\hspace{3mm}

In this subsection we show that it is easy to `switch' between any desired color classes in a quasirandom coloring, in the sense that if $C_1,C_2$ are two positive measure color classes, then there are many $c_1\in C_1$ and $c_2\in C_2$ with $c_1c_2\in C_1$.  This is a qualitative analogue of the mixing property for quasirandom groups (see \cite{gowers2008quasirandom} Theorem 5.3 and \cite{bergelson2009wm} Theorem 2.3), which implies that one can switch between any finite collection of large sets via multiplication in that setting.

To be more precise, we will need the following definition.  Refer to Subsection \ref{sect:tree} for our notation involving trees.

\begin{definition}
Given sets $A,R_1,...,R_r\subset G,$  a \textbf{color switching tree} of height $m$ and branching $2^nr$ is a finite product tree $T$ with root $a\in A$ such that:

\begin{enumerate}
    \item for each $v\in T$ of height less than $m,$ there are sets $S_1(v),...,S_r(v)$ with $|S_i|=n$ and $FP(S_i(v))\in R_i.$  The children of $v$ are precisely the elements of $FP(S_i(v))$ for each $i.$
    \item For any path $v_i,...,v_j\in T,$ the color of $v_i\cdot...\cdot v_j$ depends only on $v_i.$
\end{enumerate}

A \textbf{rootless color switching tree} is the same as above, but with the set $A=\{1\}$ and with Item (2) above only applying when the path does not start from the root.
\end{definition}

For example, a color switching tree of height one and branching $2^nr$ consists of an element $a\in A$ and sets $FP(S_1),...,FP(S_r)\subset G$ with $|S_i|=n$  and such that $FP(S_i)\subset R_i$ and $a\cdot FP(S_i)\in A$ for all $i\in [r].$  A rootless color switching tree is just the sets $FP(S_i)$ as before, with no set $A$ and element $a\in A$ specified. If $\mu(A)>0$ and each $R_i$ is a large $\IP$ set, then such height one trees exist by Lemma \ref{php2}.  The following lemmas are simply the result of iterating this fact $m$ times, where the only trick is that, since each $R_i$ is a large $\IP$ set, we can find an $IP_n$ set in $R_i$ all of whose elements start a color switching tree of height $m-1$ and arbitrarily high branching.

\begin{lemma}\label{tree 1}
    Suppose that $A,R_1,...,R_r\subset G$ with $\mu(A)>0,$ and for any $m,n'\in \mathbb{N}$ there are elements of $R_1,...,R_r$ which form a rootless color switching tree of height $m$ and branching $2^{n'}r$.  Then for any $m,n\in \mathbb{N}$ the set of $a\in A$ that form the root of a color switching tree of height $m$ and branching $2^nr$ has positive measure.  Moreover, this measure is bounded from below by a function depending only on $\mu(A),m$ and $n$.  
\end{lemma}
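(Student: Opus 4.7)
My plan is to induct on $m$, showing that for every $n$ there is a quantity $g_m(\mu(A),n)>0$ which lower-bounds the measure of the set of valid roots. The base case $m=1$ will unwind the definition via Lemma \ref{php2}, and the inductive step will bootstrap by treating the set of roots of height-$(m-1)$ trees as a new ``target'' set and rerunning the base case against it.

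For the base case, I will first use the hypothesis at height one to obtain sets $S_i'\subset R_i$ of any desired large size $n'$ with $FP(S_i')\subset R_i$. Choosing $n'$ large enough and applying the right-multiplication analogue of Lemma \ref{php2} with $A$ and these $S_i'$ produces $S_i\subset FP(S_i')$ of size $n$ and a set $B\subset A$ of measure at least $(\mu(A)^2/2)^{rn}$ such that $b\cdot FP(S_i)\subset A$ for every $b\in B$. Because the proof of Lemma \ref{php2} builds each element of $S_i$ as a product of a consecutive block of elements of $S_i'$, one automatically has $FP(S_i)\subset FP(S_i')\subset R_i$, so each $b\in B$ is the root of a valid height-$1$ tree.

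For the inductive step, I will set
\[ B_{m-1,n'}=\{a\in A:a\text{ is the root of a color switching tree of height }m-1\text{ and branching }2^{n'}r\} \]
and pick $n'$ large enough (depending on $m$, $n$, $r$, and $\mu(A)$). By the inductive hypothesis $\mu(B_{m-1,n'})\geq g_{m-1}(\mu(A),n')>0$. Running the base case argument verbatim with $A$ replaced by $B_{m-1,n'}$ produces a positive-measure set $B'\subset B_{m-1,n'}$ and sets $S_i$ of size $n$ with $FP(S_i)\subset R_i$ and $a\cdot FP(S_i)\subset B_{m-1,n'}$ for every $a\in B'$. Such an $a$ becomes the root of a height-$m$ tree by attaching, at each child $a\cdot v$, the height-$(m-1)$ subtree guaranteed by membership in $B_{m-1,n'}$; defining $g_m$ recursively from $g_{m-1}$ via the Lemma \ref{php2} bound at this step then completes the induction.

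The main subtlety I expect to navigate is verifying condition (2) of the definition on the assembled tree. Paths starting at an internal vertex lie entirely inside one of the height-$(m-1)$ subtrees, so color constancy there is inherited from the inductive hypothesis; but for a path $(a,v_1,\dots,v_j)$ starting at the root, I need $c(av_1\cdots v_j)=c(a)$. This will follow because $av_1\in B_{m-1,n'}\subset A$ forces $c(av_1)=c(a)$, and then condition (2) applied inside the subtree rooted at $av_1$ gives $c(av_1\cdot v_2\cdots v_j)=c(av_1)=c(a)$. A secondary point I will need to be careful about is that the extraction of $S_i\subset FP(S_i')$ via Lemma \ref{php2} really yields $FP(S_i)\subset R_i$; this relies on the consecutive-block structure of the elements produced inside the proof of that lemma rather than on its bare statement.
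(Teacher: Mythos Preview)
Your induction on $m$ and the base case via Lemma \ref{php2} are the same as in the paper, and your verification of condition (2) for paths starting at the root is correct.  The gap is at level one.  In your inductive step, the height-$(m-1)$ tree you attach below a first-level vertex $v_1$ is a color switching tree whose root is $av_1$, not $v_1$.  Condition (2) for that tree controls the color of $(av_1)\cdot v_2\cdots v_j$ (paths from its root) and of $v_2\cdots v_j$ (paths from its level $\geq 1$), but says nothing about $v_1\cdot v_2\cdots v_j$.  Hence your claim that ``paths starting at an internal vertex lie entirely inside one of the height-$(m-1)$ subtrees, so color constancy there is inherited'' is false precisely for paths that start at the level-one labels $v_1$: those paths are not paths of the attached subtree, and you have no control over their color.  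Since condition (2) must hold for every path in the assembled tree, this breaks the argument.

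The paper avoids this by \emph{not} assembling the tree from independently chosen pieces.  Instead it invokes the hypothesis directly at height $m$ to obtain a single rootless color switching tree $T$ of height $m$ and very large branching, and then carves out a subtree of $T$ while shrinking $A$.  Because every non-root path of the final tree is already a path of $T$, condition (2) at every internal level---in particular at level one---is inherited for free from $T$.  The inductive hypothesis is then applied, for each first-level vertex $f\in T$, to the translated set $A'_i\cdot f\subset A$ together with the rootless subtree of $T$ below $f$; this produces the further shrinking $A_i\subset A$ with $A_i\cdot t\subset A$ for all rooted paths $t$ through $f$, which is exactly what is needed for condition (2) at the root.  The point is that the global rootless tree supplies the level-one color information that your gluing construction cannot recover.
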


\begin{proof}
    We proceed by induction on the height of the tree $m$.  For $m=1,$ consider $S_1,...,S_r$ with $|S_i|=n'$ and $FP(S_i)\in R_i.$  So long as $n'$ is large enough with respect to $\mu(A),$ we can apply Lemma \ref{php2}, obtaining the desired $IP_n$ sets and positive measure subset of $A.$

    Now suppose that we have proven the Lemma for trees of height $m-1$ and any branching and next wish to prove it for trees of height $m.$  Fix $n'$ large enough depending on $m$ and $\mu(A).$  By assumption, there is a rootless color switching tree $T$ of height $m$ and branching $2^{n'}r.$  Let $S_i=S_i(\emptyset)$ be the sequences corresponding to the children of $\emptyset$ in $T.$  Note, by definition, that each element $f\in FP(S_i)$ is the root of a color switching tree $T(f)$ of height $m-1.$  As in the base case, applying Lemma \ref{php2} to $A$ and the set $S_i$ produces a set $A_i'\subset A$ of measure bounded from below and an $IP_n$ set $S_i'\subset FP(S_i)$ such that $A_i'\cdot f\subset A$ for each $f\in S_i'.$  For each such $f\in S_i'$, iteratively applying the inductive hypothesis to the sets $A_i'\cdot f\subset A$ and the rootless tree corresponding to the descendants of $f,$ we obtain $A_i\subset A$ with measure bounded from below and a subtree $T'(f)\subset T(f)$ of height $m-1$ and branching $2^nr$ such that $A_i\cdot t\subset A$ for each $t\in P(T(f))$.  Iteratively running this procedure for each $i\in [r],$ we find sets $A\supset A_1\supset ... \supset A_r$ of measure bounded from below such that each element of $A_r$ forms the root for a color switching tree of height $m$ and branching $2^nr.$ 
\end{proof}





\begin{lemma}\label{switch}
    Suppose that $A,R_1,...,R_r\subset G$ with $\mu(A)>0$ and each $R_i$ a large $\IP$ set. For any $m,n\in \mathbb{N}$, the set of elements of $A$ that form the root for a color switching tree of height $m$ and branching $2^nr$ has positive measure.
\end{lemma}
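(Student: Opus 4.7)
My plan is to reduce the statement to Lemma \ref{tree 1} by showing that for every $m, n' \in \mathbb{N}$, elements of $R_1, \ldots, R_r$ admit a rootless color switching tree of height $m$ and branching $2^{n'}r$. I would obtain this via induction on $m$, working with the following slightly stronger claim: for any large $\IP$ sets $R_1, \ldots, R_r$ and any large $\IP$ set $A$, one can build such a rootless tree with the additional property that every path product from the implicit root lies in $A$.

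The base case $m = 1$ amounts to finding $FP(S_k) \subset R_k \cap A$ for each $k$, which is possible because the partition regularity of large $\IP$ sets (Lemma \ref{pr}), combined with an idempotent-ultrafilter argument, implies that $R_k \cap A$ is itself a large $\IP$ set. For the inductive step, at level $1$ I would choose $FP(S_k)$ inside $R_k \cap A \cap B$, where $B = \{v : v^{-1} A \text{ is large } \IP\}$; then for each $v \in FP(S_k)$ the inductive hypothesis with allowed set $v^{-1} A$ produces a height-$(m-1)$ rootless subtree to hang below $v$, and its path products land in $v^{-1} A$ so that the $v$-prefixed products land in $A$. To recover the original statement, I would apply the strengthened claim $r$ times with $A = c^{-1}(s_k)$, where $s_k$ is the common color of the elements of a large $\IP$ subset $R_k' \subset R_k$ extracted using Lemma \ref{pr}; this makes every path product starting at a level-$1$ vertex $v \in FP(S_k)$ land in $c^{-1}(s_k) = c^{-1}(c(v))$, which cascades to give condition (2) at every non-root vertex.

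The main obstacle is precisely the claim that the refining set $B = \{v : v^{-1} A \text{ is large } \IP\}$ is large $\IP$ (together with its intersection with $R_k \cap A$). The cleanest way to see this is through an idempotent ultrafilter $q$ in the closed subsemigroup of $\beta G$ supported on positive-measure ultrafilters: every $q$-set is large $\IP$, and the defining idempotency relation $A \in q \Leftrightarrow \{v : v^{-1} A \in q\} \in q$ makes the refinement automatic, with finitely many intersections of $q$-sets remaining in $q$. This is the step that forces the appearance of idempotent ultrafilters (or an equivalent elementary iteration through Lemma \ref{pr}) in what is otherwise a very direct inductive construction.
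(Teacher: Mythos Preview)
Your plan correctly identifies that one should reduce to Lemma~\ref{tree 1} by constructing rootless color switching trees, but the inductive construction you propose has a genuine gap: the family of large $\IP$ sets is partition regular but is \emph{not} a filter, so the intersection $R_k\cap A$ (or $R_k\cap A\cap B$) of two large $\IP$ sets need not be large $\IP$, and can even be empty. Concretely, if $p\neq q$ are distinct idempotents in the positive-measure subsemigroup of $(\beta G,*)$ and $A\in p\setminus q$, then both $A$ and $A^c\in q$ are large $\IP$ while $A\cap A^c=\emptyset$; nothing in the hypotheses of your strengthened claim rules out $R_k$ and $A$ being of this form. Your idempotent-ultrafilter remedy would work if all the $R_i$ lay in a single idempotent $q$, but the lemma only assumes each $R_i$ is large $\IP$, and there is no way to force $R_1,\ldots,R_r$ into a common $q$. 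The same issue recurs at every level of the induction, since after passing to $v^{-1}A$ you again need $R_j\cap v^{-1}A$ to be large $\IP$ for every $j$, not just the $j$ that produced $v$.

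The paper sidesteps this entirely by inducting directly on the statement of Lemma~\ref{switch} rather than on a strengthened rootless claim. The inductive hypothesis then reads: for every set $A$ of \emph{positive measure}, a positive-measure subset of $A$ consists of roots for height-$m$ trees. To go from height $m$ to $m+1$ one needs (via Lemma~\ref{tree 1}) only an $IP_{n'}$ set inside each $R_i$ whose elements are all roots of height-$m$ trees. Here the ``many tuples'' definition of large $\IP$ is used exactly once: fixing a valid $(x_0,\ldots,x_{n'-1})$ with $FP(x_{n'},\ldots,x_0)\subset R_i$, the last coordinate ranges over a positive-measure set $X_{n'}(x_0,\ldots,x_{n'-1})$, and one applies the inductive hypothesis to $X_{n'}$ and to each right-translate $X_{n'}\cdot f$ for $f\in FP(x_{n'-1},\ldots,x_0)$. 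Because positive measure (unlike ``large $\IP$'') is stable under translation and only finitely many such sets arise, one can intersect down to a single $x_{n'}$ for which every $x_{n'}f$ is a root. No filter property of large $\IP$ sets is ever invoked; that hypothesis serves solely to produce the positive-measure sets $X_{n'}$ on which the real induction runs.
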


\begin{proof}
    We proceed by induction on the height of the tree.  For trees of height one, consider $S_1,...,S_r$ with $|S_i|=n'$ and $FP(S_i)\in R_i.$  So long as $n'$ is large enough with respect to $\mu(A),$ we can apply Lemma \ref{php2} to $A$ and the sets $S_i$, obtaining the desired $IP_n$ sets and positive measure subset of $A.$

    Now suppose that we have proven the Lemma for trees of height $m$ and any desired branching.  Consider $A\subset G$ with positive measure and choose $n'$ large enough depending on $\mu(A)$.  In order to prove Lemma \ref{switch} for height $m+1$ and branching $2^nr$, by Lemma \ref{tree 1} it suffices to build a rootless color switching tree of height $m+1$ and branching $2^{n'}r$ using elements of the $R_i.$  Note that this is equivalent to showing that, for each $i,$ there is an $IP_{n'}$ subset of $R_i$ all of whose members are the root for a color switching tree of height $m$ and branching $2^{n'}r.$

    Restating our previous observation, it suffices to prove the following:

    \begin{claim}\label{rootless}
        Suppose that $R_1,...,R_r$ are large $\IP$ sets, and suppose that we have proven Lemma \ref{switch} for trees of height $m$ and any branching.  For each $i\in [r]$ and $n'\in \mathbb{N},$ there is an $IP_{n'}$ set in $R_i$ all of whose members are the root for a color switching tree of height $m$ and branching $2^{n'}r.$
    \end{claim}

    \begin{proof}
         For fixed $i\in [r]$ and $n'\in \mathbb{N},$ the definition of being a large $\IP$ set ensures that there are many tuples $(x_0,...,x_{n'})$ with $FP(x_{n'},...,x_0)\in R_i.$  For a valid choice of $(x_0,...,x_{n'-1}),$ consider $X_{n'}(x_0,...,x_{n'-1}).$ This has positive measure by definition, and so the inductive hypothesis of Lemma \ref{switch} gives a positive measure subset of $X_{n'}(x_0,...,x_{n'-1})$ whose elements are all roots for  color switching trees of height $m$ and branching $2^{n'}r$.  Replacing $X_{n'}(x_0,...,x_{n'-1})$ with this subset and repeating this procedure for each set $$A=X_{n'}(x_0,...,x_{n'-1})\cdot f$$ for $f\in FP(x_{n'-1},...,x_0)$ produces, for a given $(x_{n'-1},...,x_0),$ an $x_{n'}$ such that $x_n\cdot f$ is the root of a color switching tree of height $m$ and branching $2^{n'}r$ for each $f\in FP(x_{n'-1},...,x_0).$  Repeating this entire process for each valid choice of $(x_0,...,x_i)$ for $i<n'$ produces a sequence $(x_{},...,x_{n'})$ such that each element of $FP(x_{n'},...,x_0)\subset R_i$ is the root of a color switching tree of height $m$ and branching $2^{n'}r.$  
    \end{proof}

\end{proof}

\section{Proof of Theorem \cref{thm:main}}

\subsection{The quasirandom case}\label{sect:quasi}

\hspace{3mm}

In this subsection we prove theorem \ref{thm:main} for quasirandom colorings (and, consequently, for all colorings of quasirandom amenable groups).  The idea of the proof is similar to the `locally balanced' case of the proof of the main result from \cite{bowen2022monochromatic} and the `thick' case of the proof of the main result from \cite{bowen.sabok}, with the additional ease over the $+,\cdot$ results coming from the fact that the left and right actions of $G$ on $(G,\mu)$ commute.  Most of the work in the proof of Theorem \ref{thm:main} will come in the next subsection, where we reduce the problem to the quasirandom case studied here.

\begin{lemma}\label{proof:quasi}
    Suppose that $G=C_0\cup...\cup C_r,$ where each $C_i$ is a large $\IP$ set.  For any $k\in \mathbb{N}$ there are many monochromatic sets of the form 
    
    $$\{x_i\cdot x_{i+1}\cdot...\cdot x_j, \overrightarrow{\prod_{a\in I}}x_a\cdot x_0\cdot x_1\cdot...\cdot x_j: i\leq j\in \{0,...,k\}, I\subset \{j+1,...,k\}\}.$$
\end{lemma}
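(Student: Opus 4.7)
The proof proceeds by induction on $k$. The base case $k = 0$ is trivial, since the set collapses to $\{x_0\}$.

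For the inductive step, assume that for many tuples $(x_0, \ldots, x_{k-1})$ the corresponding set at level $k-1$ is monochromatic in some color $c^*$, and fix a valid choice. Writing $u_i := x_i \cdots x_{k-1}$ (with $u_k := 1$), $v_j := x_0 \cdots x_j$, and $w_I := \prodr_{a \in I} x_a$ (with $w_\emptyset := 1$), the new elements obtained by adjoining $x_k$ are $u_i \cdot x_k$ for $0 \le i \le k$, together with $w_I \cdot x_k \cdot v_j$ for $0 \le j \le k-1$ and $I \subset \{j+1, \ldots, k-1\}$. By induction, each of $u_i$, $v_j$, and $w_I v_j$ lies in $C_{c^*}$, so the task is to find a positive-measure set of $x_k$ for which these new products also lie in $C_{c^*}$.

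The plan is to combine Lemma~\ref{switch} with a right-multiplication analogue of Lemma~\ref{php2}. First, I would pre-restrict to a positive-measure $A' \subset C_{c^*}$ on which all fixed left-translations by the $u_i$'s and $w_I$'s, together with all right-translations by the $v_j$'s, land in $C_{c^*}$; this uses that $\mu$ is both left- and right-invariant, together with the IP structure supplied by the strengthened inductive hypothesis mentioned below. Because the left and right $G$-actions on $(G,\mu)$ commute, the sandwich constraints $w_I x_k v_j \in C_{c^*}$ decouple into separate left- and right-multiplication conditions, both of which are captured by this pre-restriction. Then I apply Lemma~\ref{switch} to $A'$ with $R_i = C_i$ (each a large $\IP$ set by the quasirandomness of the coloring) to produce a positive-measure set of $x_k \in A'$ rooting color switching trees of sufficiently large height and branching, and use item~(2) of the tree definition to control any remaining path-product constraints.

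The main technical obstacle is that the multipliers $u_i, v_j, w_I$ are specific group elements fixed by the inductive choice and do not \emph{a priori} carry the IP-product structure needed to apply Lemma~\ref{php2} directly. To overcome this I would strengthen the inductive hypothesis: one demands not only that the level-$(k-1)$ configuration be monochromatic but also that $(x_0, \ldots, x_{k-1})$ be drawn from a color switching tree built at the previous stage, so that the $u_i$'s and $w_I$'s automatically appear as path products inside an IP set in $C_{c^*}$. Scheduling the tree parameters (heights, branching, and IP sizes) so that this strengthened hypothesis can be propagated across the induction is the main bookkeeping burden of the proof, and closely parallels the \emph{locally balanced} and \emph{thick} reductions from \cite{bowen2022monochromatic, bowen.sabok}.
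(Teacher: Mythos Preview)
Your induction on $k$ has a genuine gap at the sandwich constraints. The claim that ``the sandwich constraints $w_I x_k v_j \in C_{c^*}$ decouple into separate left- and right-multiplication conditions'' is false: from $w_I A' \subset C_{c^*}$ and $A' v_j \subset C_{c^*}$ one gets $w_I x_k \in C_{c^*}$ and $x_k v_j \in C_{c^*}$ for $x_k \in A'$, but neither implies $w_I x_k v_j \in C_{c^*}$. Commutativity of the left and right actions only asserts $L_{w_I} R_{v_j} = R_{v_j} L_{w_I}$ as maps on $G$; it cannot split a two-sided containment into two one-sided ones. Your strengthened hypothesis, placing $(x_0,\ldots,x_{k-1})$ along a color switching tree, does not rescue this either: the tree controls the color of path products $v_i\cdots v_j$, but once $v_j = x_0\cdots x_j$ is \emph{fixed} and appears on the right of an expression that is not itself a rooted path, the tree says nothing. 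Nor does Lemma~\ref{php2} help directly, since applying it would force you to replace the fixed $w_I$'s by elements of a sub-$IP$ set, thereby altering $x_{j+1},\ldots,x_{k-1}$ and, through the $v_j$'s, the already-fixed $x_0,\ldots,x_j$ in mutually incompatible ways across different pairs $(I,j)$.

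The paper's argument is not an induction on $k$ at all. It runs a single color-focusing of length $k(r+1)$: starting from a pre-built rootless color switching tree (Claim~\ref{rootless}), it constructs a nested chain $C_0=A_0\supset A_1\supset\cdots\supset A_{k(r+1)}$, elements $y_0,\ldots,y_{k(r+1)}$ taken from the tree, and a color function $f$, where at each step Lemma~\ref{php2} yields $y_iA_{i+1}\subset A_i$ (left control) and a pigeonhole refinement yields $A_{i+1}y_0\cdots y_i\subset C_{f(i+1)}$ (right control). A final pigeonhole on $f$ selects $k+1$ indices with common value, the $x_i$ for $i\ge 1$ are consecutive blocks of $y$'s, and $x_0$ is chosen \emph{last} from the terminal set $A_{k(r+1)}$ (suitably shifted). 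Because $x_0$ is precisely the element sitting in the middle of every sandwich $w_I\cdot x_0x_1\cdots x_j$, deferring its choice to a set that has already absorbed all left- and right-multiplication constraints is exactly what makes the two-sided conditions tractable. Your scheme fixes $x_0$ first and leaves $x_k$, which sits on only one side of $v_j$, as the free variable; that is the wrong end of the product to vary, and no amount of tree bookkeeping repairs it.
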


\begin{proof}
    Before we begin, we note that the case $k=2,$ where one only desires monochromatic $\{x,y,xy,yx\},$ already has all of the needed ideas for the proof of the general result.  The only difference is that one applies the pigeonhole principle at the last step to find a monochromatic sequence of length $k$ rather than length $2.$
    
    Without loss of generality we may assume that $\mu(C_0)\geq \frac{1}{r-1}.$  Apply Claim \ref{rootless} with large $\IP$ sets $R_i=C_i$ and with $m=r+1,$ and $n$ large enough depending on $r$ to construct a rootless color switching tree $T$ of height $k(r+1)$ and branching $2^n(r+1).$ 

    We inductively define sequences $C_0=A_0\supset A_1\supset...\supset A_{k(r+1)}$ and $y_{-1},y_0,...,y_{k(r+1)}\in G$ and a function $f:\{-1,0,...,k(r+1)\}\rightarrow \{0,...,r\}$ such that:
\vspace{8mm}
    \begin{enumerate}
        \item $\mu(A_{i+1})>\frac{\mu(A_i)^2}{2(r+1)}.$
        \item $y_iA_{i+1}\subset A_i.$
        \item $A_{i+1}y_0\cdot...\cdot y_i\subset C_{f(i+1)}.$
        \item $f(-1)=0$ and $y_{-1}=1.$
        \item $y_{i+1}\in T(y_0,...,y_i)\cap C_{f(i)}.$
    
    \end{enumerate}

    To see that this gives the desired result, note by the pigeonhole principle that there are $a_0<...<a_k\in \{-1,...,k(r+1)\}$ with $f(a_i)=f(a_j)=f$ for all $i,j\in \{0,...,k\}.$  Letting $x_i=y_{a_i+1}\cdot...\cdot y_{a_{i+1}}$ for $i>0$ and $X=A_{k(r+1)}y_{-1}\cdot...\cdot y_{a_0},$ we see that $X\subset C_{f}$ by Item (2), $x_a\cdot...\cdot x_b\in C_{f}$ for all $1\leq a\leq b\leq k$ by Item (5) and the definition of $T,$ and that $\prodr_{i\in F}x_i\cdot X\cdot x_1\cdot...\cdot x_i\subset C_{f(a_{i+1})}=C_{f(a_1)}=C_f\in C_{f}$ for all $F\subset \{1,...,k\}$ by Items (2) and (3).  Moreover, by Item (1) we know $\mu(X)>0,$ and so for a given $T$ we find a single $y$ and many valid choices of $x_0\in X$, and since Lemma \ref{switch} gives us many choices of $T$ we have many choices of $x_1,...,x_k$ as desired. 

Therefore, it suffices to construct the sequences $A_i$ and $y_i$ and a function $f$ satisfying Items (1)-(5).  Towards this, suppose that we have defined $A_0\supset ...\supset A_i$ and $y_{-1},...,y_{i-1}$ and the partial function $f: \{-1,...,i\}\rightarrow \{0,...,r\}$ as desired.  To continue, we apply Lemma \ref{php2} with $A=A_i$ and $S=T(y_{-1}\cdot...\cdot y_{i-1})\cap C_{f(i)}$ to find $A_{i+1}'\subset A_i$ and $y_i\in S$ such that $y_{i}A_{i+1}'\subset A_i.$  Finally, by the pigeonhole principle there is some color $j$ such that 
$$\mu(\{x\in A_{i+1}': xy_0\cdot...\cdot y_i\in C_j\})\geq\frac{\mu(A_{i+1})}{r+1}>\frac{\mu(A_{i})^2}{2(r+1)}.$$

Letting $A_{i+1}$ be this subset of $A_{i+1}'$ and $f(i+1)=j$ is as desired.
    
\end{proof}

\subsection{Reduction to the quasirandom case}\label{sect:reduct}

\hspace{3mm}

In this subsection we prove our main result.  In the previous section we saw that Theorem \ref{thm:main} holds for quasirandom colorings, and so our goal in this section is to show that, in an arbitrary coloring, one can always pass to an `invariant enough' subset of $G$ where the coloring appears quasirandom and we can run the proof from Section \ref{sect:quasi} on this subset.  

To explain the idea, let's imagine that we're in a simple special case where we have a coloring $G=C_0\cup C_1\cup...\cup C_r,$ where $C_i$ for $i>0$ is a large $\IP$ set and $C_0$ is a positive measure set with no subsets of the form $\{x,y,xy\}.$  In this case, we can apply Lemma \ref{switch} with $A=C_0$ and $R_i=C_i$ for $i>0$ to find an as large as desired color switching tree $T$ with root $v_0\in C_0.$  By definition, this tells us that for any rooted path $v_0,...,v_k\in T$ we have $y=v_0\cdot...\cdot v_k\in C_0,$ and so we know $C_0y\cap C_0=\emptyset,$ as otherwise we could find a set of the form $\{x,y,xy\}\subset C_0.$  In particular, we can now run the proof from Section \ref{sect:quasi} starting with $A_0=C_0v_0$ and the using the rootless color switching tree $T\setminus\{v_0\},$ finding the desired configuration in $G\setminus C_0.$

The approach we take in the following is essentially as above, but we must deal with the added complications that arise from only being able to assume that $C_0$ does not have many sets of the form $FP(x_k,...,x_0)$ for each $k.$

\begin{lemma}\label{tec}
    Suppose that $G=C_0\cup...\cup C_r$ where $C_i$ is a large $\IP$ set for $i>0$. Either $C_0$ is also a large $\IP$ set or some $C_i$ for $i>0$ contains many sets of the form  
    $$\{x_i\cdot x_{i+1}\cdot...\cdot x_j, \overrightarrow{\prod_{a\in I}}x_a\cdot x_0\cdot x_1\cdot...\cdot x_j: i\leq j\in \{0,...,k\}, I\subset \{j+1,...,k\}\}.$$ 
\end{lemma}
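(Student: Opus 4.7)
The plan is as follows. If $C_0$ is a large $\IP$ set we are done, so assume not; then there is some $k^* \geq k$ (we may take $k^* \geq k$, since failure at some $k^*$ propagates upward by projecting tuples) such that $C_0$ does not contain many $FP(x_{k^*}, \ldots, x_0)$ tuples. I will leverage this failure, in the spirit of the sketch at the start of Section \ref{sect:reduct}, to force the required configurations into one of the large $\IP$ classes $C_1, \ldots, C_r$. The degenerate case $\mu(C_0) = 0$ will be handled by absorbing $C_0$ into $C_1$ (still a large $\IP$ set after adjoining a null set) and applying Lemma \ref{proof:quasi} directly, so from now on assume $\mu(C_0) > 0$.

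First I would apply Lemma \ref{switch} with $A = C_0$ and $R_i = C_i$ for $i > 0$ to produce, for any chosen height $M$ and branching $2^N r$, a positive-measure set of roots $v_0 \in C_0$ admitting color switching trees $T$. The switching property guarantees that rooted path-products in $T$ lie in $C_0$, while non-rooted subpath-products starting at some $v_i$ with $i \geq 1$ lie in $C_{c(v_i)}$ with $c(v_i) \in \{1, \ldots, r\}$. I would then transfer the $G$-coloring to a coloring $c'$ on the path-vertices of $T$ via their group products and apply Lemma \ref{Baire} with parameter $k = k^*$ and output-height threshold chosen large enough in terms of $k$, $k^*$, $r$. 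Hypothesis (1) of Lemma \ref{Baire} holds since $c'(v_0) = 0$. If hypothesis (2) also held, Lemma \ref{Baire} would supply a tall subtree all of whose path-products lie in $C_0$; unpacking this subtree using the color switching structure of $T$ would yield many $FP(x_{k^*}, \ldots, x_0) \subset C_0$, contradicting the choice of $k^*$. Hence hypothesis (2) must fail, giving $k^* + 1$ nested subpaths in $T$ whose partial products all lie in a common $C_a$ with $a > 0$, together with a further continuation of color $a$.

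This color-$a$ spine plays the role of a pre-built initial segment of the construction in the proof of Lemma \ref{proof:quasi}. I would feed it in as the initial $y_i$'s (with $A_0$ a positive-measure shift of a subset of $C_a$), then continue the iteration of Lemma \ref{proof:quasi}'s proof along the remaining portion of $T$. The usual pigeonhole on $k(r+1) + 2$ indices gives $k + 1$ sharing a common color $j$; a further appeal to $C_0$ not being large $\IP$ rules out $j = 0$, and the resulting $x_i = y_{a_i + 1} \cdots y_{a_{i+1}}$ give many monochromatic configurations of the desired form in $C_j$, with positive measure of the witnesses preserved throughout by Lemma \ref{php2}.

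The main obstacle will be verifying that a hypothetical all-color-$0$ subtree from Lemma \ref{Baire} genuinely encodes many $FP(x_{k^*}, \ldots, x_0) \subset C_0$: individual rooted paths in the subtree only yield prefix chains in $C_0$, not full $FP$ structures, so one must combine cross-path information (using that $T(v_0, v_1, \ldots, v_i)$ is independent of $v_1$) to assemble all $2^{k^*+1} - 1$ required products inside $C_0$. This is the principal difference from the special case sketched before the lemma: there, the one-step observation $C_0 y \cap C_0 = \emptyset$ sufficed, whereas here one must chain $k^*$ layers together through the color switching structure.
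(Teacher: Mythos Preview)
Your outline captures the right ingredients---Lemma~\ref{switch}, Lemma~\ref{Baire}, and a contradiction with $C_0$ failing to be large $\IP$---but the way you assemble them has a genuine gap, which you yourself flag in your final paragraph and do not resolve.

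The paper does \emph{not} apply Lemma~\ref{Baire} once to a color switching tree rooted at an element $v_0\in C_0$. If you do that, the output of Lemma~\ref{Baire} is a subtree whose rooted path-products $v_0 v_1\cdots v_i$ lie in $C_0$; these are only \emph{prefix} products, and no amount of ``cross-path information'' from the independence-of-$v_1$ hypothesis converts them into a full $FP(x_{k^*},\ldots,x_0)\subset C_0$. (That hypothesis is also not automatic for a color switching tree; it has to be engineered.) The paper instead runs an induction on the length of the desired $FP$ set, encapsulated in Claim~\ref{main tec}: given a finite $F\subset C_0$ recording all already-built partial products and a rootless tree $T'$ with $F\cdot P(T')\subset C_0$, one finds a positive-measure $C\subset C'\subset C_0$ and a subtree $T\subset T'$ with $C\cdot F\cdot P(T)\subset C_0$. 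Proving this requires first building a \emph{color focusing tree} (Claim~\ref{construct}) whose vertices are positive-measure \emph{sets} $A f t$ rather than single elements, with the monochromaticity and translation-invariance needed so that a would-be non-$C_0$ path in Lemma~\ref{Baire} yields a positive-measure set of $x$'s with $\{x,y,xy,yx\}$ monochromatic in some $C_a$, $a>0$. This set-valued focusing is what produces ``many'' configurations; a chain of elements in a single tree, as in your plan for the case hypothesis~(2) fails, gives at best one configuration and does not meet the ``many'' requirement.

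In short: the missing idea is the intermediate color focusing tree of Claim~\ref{construct}, which simultaneously (i) manufactures the independence-of-$v_1$ hypothesis for Lemma~\ref{Baire}, (ii) upgrades single witnesses to positive-measure sets of witnesses, and (iii) via the parameter $F$ allows an induction that builds the full $FP$ structure one coordinate at a time. Your proposal jumps from Lemma~\ref{switch} directly to Lemma~\ref{Baire} and loses all three.
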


Before proving this lemma, let us first note that together with Lemma \ref{proof:quasi} it immediately implies Theorem \ref{thm:main}.

\begin{proof}[Proof of Theorem \ref{thm:main}]
    If every color class is a large $\IP$ set then we are done by Lemma \ref{proof:quasi}.  Otherwise, we may assume that we are given a coloring $G=C_0'\cup...\cup C_i'\cup C_{i+1}'\cup ...\cup C_{r'}',$ where $C_j'$ is a large $\IP$ set if and only if $j>i\geq 0.$  By Lemma \ref{pr} we may assume that $i<r'$ and that $C_0'\cup...\cup C_i'$ is not a large $\IP$ set.  Applying Lemma \ref{tec} with the coloring $G=C_0\cup C_1\cup...\cup C_r,$ where $C_0=C_0'\cup...\cup C_i'$ and $C_j=C_{i+j}'$ for $j>0$ gives the needed configuration in a set $C_j$ for $j>0$.
\end{proof}

\begin{proof}[Proof of Lemma \ref{tec}]
    Assume that no $C_i$ contains many sets of the form $$\{x_i\cdot x_{i+1}\cdot...\cdot x_j, \overrightarrow{\prod_{a\in I}}x_a\cdot x_0\cdot x_1\cdot...\cdot x_j: i\leq j\in \{0,...,k\}, I\subset \{j+1,...,k\}\}$$ for $i>0.$  We will use this property to find many sequences $(x_0,...,x_k)$ with $FP(x_n,...,x_0)\subset C_0$ for each $k.$  In fact, for each $k,m,n\in \mathbb{N}$ we will construct (many choices of) $(x_0,...,x_k)$ such that each $x_i$ is the root of a color switching tree $T_i$ of height $m$ and branching $2^nr$ and $FP(x_ka_k,...,x_0a_0)\in C_0$ for all $a_i\in P(T_i).$

    In the base case, we simply apply Lemma \ref{switch} to the set $C_0$ to find a rootless color switching tree $T$ and a positive measure subset of $C_0$ all of whose elements can be a root of $T.$

    In the inductive step, we will use the following:

    \begin{claim}\label{main tec}
        Let $C'\subset C_0$ have positive measure.  For all $m,n\in \mathbb{N}$ and finite sets $F\subset C_0,$ there are $m',n'\in\mathbb{N}$ such that if $T'$ is a rootless color switching tree of height $m'$ and branching $2^{n'}r$ such that $F\cdot P(T')\subset C_0,$ then there is a positive measure subset $C\subset C'$ and a color switching subtree $T\subset T'$ of height $m$ and branching $2^nr$ such that $C\cdot F\cdot P(T)\subset C_0.$  Moreover, $\mu(C),m',$ and $n'$ depend only on $\mu(C'),m,,n$ and $|F|.$ 
    \end{claim}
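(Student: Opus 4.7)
The plan is to prove the claim by induction on $m$, taking $m'$ and $n'$ iteratively-exponentially large in $m$, $n$, $|F|$, $r$, and $1/\mu(C')$ to absorb the shrinking at each step. I would iterate Lemma~\ref{php2} both ``horizontally'' (across $F$ and the colors) and ``vertically'' (down the tree).

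\textbf{Base case ($m=1$).} We need $C\subset C'$ of positive measure and, for each $j\in[r]$, a sub-$IP_n$-subset $S_j^\star\subset FP(S_j)$ (the top level of $T'$ in color $j$) with $C\cdot F\cdot FP(S_j^\star)\subset C_0$. The hypothesis $F\cdot P(T')\subset C_0$ ensures each candidate product $f\cdot s$ already lies in $C_0$; what is needed is a single $C$ that left-shifts all of them into $C_0$ simultaneously. I iterate Lemma~\ref{php2} (in its right-multiplication form, which is the natural dual of the statement given) once per pair $(f,j)\in F\times[r]$: at each step I apply Lemma~\ref{php2} to the current base set (a positive-measure subset of $C'$) and to the enlarged IP-set $FP(f,s_1,\ldots,s_N)$ formed by prepending $f$ as an extra generator to the current generators of $S_j$. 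The sub-IP-set returned by Lemma~\ref{php2} has at most one generator containing $f$; restricting to those products of this sub-IP-set that include this generator as the first factor isolates the shifts of the form $f\cdot s$ with $s$ ranging over a further-shrunken IP-subset of $FP(S_j)$. After all $|F|\cdot r$ iterations the desired $C$ and $S_j^\star$'s have been produced.

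\textbf{Inductive step $(m\mapsto m+1)$.} First invoke the base case to produce $C^{(0)}\subset C'$ and top-level sub-IP-sets $S_j^\star$ with $C^{(0)}\cdot F\cdot FP(S_j^\star)\subset C_0$, pruning $T'$ to keep only these top children (and their descendants). Enumerate $\bigcup_j FP(S_j^\star)=\{s_1,\ldots,s_N\}$, with $N\leq r\cdot 2^n$. For each $s_i$ let $T'_{s_i}$ be the rootless color switching subtree hanging below $s_i$ (its new top level being $T'(s_i)$), of height $m'-1$ and branching $2^{n'}r$. Set $F_{s_i}:=\{f\cdot s_i:f\in F\}$ (finite of size $|F|$), and note $F_{s_i}\cdot P(T'_{s_i})\subset F\cdot P(T')\subset C_0$, so the hypothesis of the claim is satisfied for the pair $(F_{s_i},T'_{s_i})$. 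Apply the inductive hypothesis for height $m$ to each $s_i$ in turn, shrinking the base set from $C^{(i-1)}$ to $C^{(i)}$ and producing subtrees $T_{s_i}\subset T'_{s_i}$ of height $m$ and branching $2^n r$ with $C^{(i)}\cdot F_{s_i}\cdot P(T_{s_i})\subset C_0$. Setting $C=C^{(N)}$ and assembling $T$ as the rootless subtree of $T'$ with top children $\{s_1,\ldots,s_N\}$ and each $T_{s_i}$ hanging below its $s_i$, every path in $P(T)$ is either a single $s_i$ (handled by the base case, $C\cdot F\cdot s_i\subset C_0$) or of the form $s_i\cdot q$ with $q\in P(T_{s_i})$ (handled by $C\cdot F\cdot s_i\cdot q=C\cdot F_{s_i}\cdot q\subset C_0$).

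\textbf{Main obstacle.} The hardest part is the base case: threading the non-IP-structured finite set $F$ through the IP-shift machinery of Lemma~\ref{php2}. The device of embedding each $f$ as an extra generator in an enlarged IP-set and then restricting to sub-IP-sets whose products start with $f$ works, but each such restriction roughly halves the usable sub-IP-set and shrinks the base-set measure by the exponent in Lemma~\ref{php2}; carefully budgeting these losses is what forces $m'$ and $n'$ to be taken iteratively-exponentially large in the stated parameters.
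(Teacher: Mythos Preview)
There is a genuine gap in your base case, and it propagates through the whole argument. Your ``prepend $f$'' device does not do what you claim. When you apply Lemma~\ref{php2} (right version) to a base set $A\subset C'$ and to the enlarged generator list $(f,s_1,\dots,s_N)$, the output is a set of new generators $t_1,\dots,t_k$, each a product over some \emph{consecutive} sub-block of $(f,s_1,\dots,s_N)$, together with $B\subset A$ satisfying $B\cdot t\subset A$ for $t\in FP(t_1,\dots,t_k)$. Nothing in Lemma~\ref{php} or its iteration forces the first selected block to begin at index $0$; the proof only pigeonholes among the translates $A_i=A(y_0\cdots y_{i-1})^{-1}$ and returns whichever pair $i<j$ happens to intersect well. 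So it is perfectly possible that \emph{no} $t_\ell$ involves $f$ at all, in which case your ``restrict to products starting with $f$'' step is vacuous and you have gained nothing: you have $B\cdot s\subset A\subset C_0$ for various $s$ built from the $s_i$'s, but never $B\cdot f\cdot s\subset C_0$.

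The deeper problem is that you never invoke the standing hypothesis of Lemma~\ref{tec}, namely that no $C_i$ with $i>0$ contains many of the target configurations. Without that hypothesis Claim~\ref{main tec} is simply false: nothing about recurrence alone forces $c\cdot f\cdot t$ to land in the \emph{particular} color $C_0$ rather than in some other $C_i$. Recurrence tells you there exist shifts returning a positive-measure piece of $A$ to $A$; it does not let you \emph{prescribe} the shift to be of the form $f\cdot(\text{something})$ and land in $C_0$. The paper's proof is organized around exactly this point. It first builds (Claim~\ref{construct}) a color-focusing tree tracking, for every $f\in F$ and every rooted path $v_1,\dots,v_i$ in a subtree $T''\subset T'$, the common color of $A\cdot f\cdot v_1\cdots v_i$, while simultaneously arranging the two-sided invariance in item~(3). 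Then it observes that if along some branch the recorded color repeats $k$ times in a fixed non-$C_0$ color, item~(3) produces precisely the forbidden monochromatic configuration in that $C_i$, contradicting the standing hypothesis. This is the input to Lemma~\ref{Baire}, which then extracts a subtree all of whose paths are colored $C_0$; that subtree is the desired $T$ and the surviving base set $A$ is the desired $C$. Your induction on $m$ with iterated Lemma~\ref{php2} cannot replace this mechanism, because the conclusion ``lands in $C_0$'' is not a recurrence statement but a Ramsey-theoretic one forced by the assumed absence of configurations in the other colors.
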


    Before proving Claim \ref{main tec}, let us first use it to complete the proof of Lemma \ref{tec}.

    Suppose that we have already found many choices of $(x_0,...,x_k)$ such that each $x_i$ is the root of a color switching tree $T_i'$ of height $m'$ and branching $2^{n'}r$ and $FP(x_ka_k,...,x_0a_0)\in C_0$ for all $a_i\in P(T_i').$  Given a valid choice of such $(x_0,...,x_k)$ and $T_i'$ we will construct a positive measure set $X_{k+1},$ a rootless color switching tree $T_{k+1}$ of height $m$ and branching $2^{n}r,$ and  subtrees $T_i\subset T_i'$ of height $m$ and branching $2^nr$ such that each element of $X_{k+1}$ can be a root for $T_{k+1}$ and $FP(x_{k+1}a_{k+1},...,x_0a_0)\in C_0$ for all $a_i\in P(T_i)$ and $x_{k+1}\in X_{k+1}.$  Repeating this for each $k\in \mathbb{N}$ demonstrates that $C_0$ is a large $\IP$ set.

    This construction can be achieved by iterating Claim \ref{main tec}.  To see this, we first start with $X=C_0$ and apply Claim \ref{main tec} with $C'=X,$ $F=\{x_k\}$, and $T'=T_k$ to find a positive measure set $X_1\subset X$ and $T_k\subset T_k'$ as desired.  We then repeat this, applying Claim \ref{main tec} with $C'=X_1$, $F=\{x_ka_kx_{k-1}: a_k\in T_k\}\cup\{x_{k-1}\}$, and $T'=T_{k-1}'$  to find the needed positive measure set $X_2\subset X_1$ and $T_{k-1}\subset T_{k-1}'.$  Continuing in this way gives subtrees $T_i\subset T_i'$ for $i\leq k$ and a positive measure set $X_{k+1}'$ such that $FP(x_{k+1}a_{k+1},...,x_0a_0)\in C_0$ for all $a_i\in T_i$ and $x_{k+1}\in X_{k+1}.$  To obtain the set $X_{k+1}\subset X_{k+1}'$ and $T_{k+1},$ we now apply Lemma \ref{switch} to the set $X_{k+1}'$. 

      \end{proof}

    The proof of Claim \ref{main tec} follows from running the proof of Lemma \ref{proof:quasi} simultaneously for the sets $C'f$ for each $f\in F.$  This can be accomplished by using a `color focusing tree,'  a variant of the typical color focusing technique introduced in \cite{bowen.ramsey.tree} which takes advantage of the fact that one has many choices for each next step size in a typical coloring focusing argument.   The hypothesis of Lemma \ref{tec} ensures that each branch of the color focusing tree that we construct will terminate in color $C_0.$

    \begin{claim}\label{construct}
        There is a tree $\mathbf{T}$ whose vertices are positive measure subsets of $G$ and whose edges are labeled by vertices of a rootless color switching subtree $T''\subset T'$ such that:

    \begin{enumerate}
        \item The root of $\mathbf{T}$ is a set $A\subset C'.$

        \item Vertices in $\mathbf{T}$ are monochromatic and are the sets of the form $Aft$ for $f\in F\cup \{1\}$ and $t\in P(T'').$

        \item For any rooted path $1,v_1,...,v_n\in T'',$ any $f\in F,$ and any $i\leq j\leq m,$ the set $v_j\cdot...\cdot v_n\cdot Af\cdot v_1\cdot... v_{i-1}$ is monochromatic of the same color as $Af\cdot v_1\cdot... \cdot v_{i-1}.$
    \end{enumerate}
    \end{claim}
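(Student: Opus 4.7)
The plan is to build $\mathbf{T}$ and the subtree $T''\subset T'$ simultaneously by induction on the level, running a parallel version of the color-focusing argument from Lemma~\ref{proof:quasi} across all translates $A\cdot f$ with $f\in F\cup\{1\}$. I would take the root of $\mathbf{T}$ to be a positive measure subset $A\subset C'$; since $C'\subset C_0$, this root is automatically monochromatic.

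For the inductive step, suppose the top $i-1$ levels of $T''$ have been fixed along with a positive measure $A$ satisfying conditions (2) and (3) on all prefixes of length at most $i-1$. To extend one level, I would first apply Lemma~\ref{php2} to the current $A$ and the level-$i$ children of $T'$ at each leaf of the partial $T''$, producing a positive measure $A'\subset A$ and an $IP_n$ collection of candidates $v_i$ satisfying $v_j\cdots v_i\cdot A'\subset A$ for every $j\leq i$. This requires the branching $2^{n'}r$ of $T'$ to be sufficiently large compared to $2^n r$. Then I would apply color pigeonhole iteratively: for each $f\in F\cup\{1\}$, each selected candidate $v_i$, and each index $j\leq i$, refine $A'$ so that $v_j\cdots v_i\cdot A'\cdot f\cdot v_1\cdots v_{j-1}$ is monochromatic. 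Since the index sets are all finite, only finitely many refinements are required and the total multiplicative measure loss is bounded by a function of $m$, $n$, $|F|$, and $r$.

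To handle the \emph{same color} clause of property (3), I would use that the color $c_{f,j}$ of $A\cdot f\cdot v_1\cdots v_{j-1}$ was fixed at an earlier step of the induction. Then for each candidate $v_i$ I restrict $A'$ to the set of $x$ with $v_j\cdots v_i\cdot x\cdot f\cdot v_1\cdots v_{j-1}\in C_{c_{f,j}}$; I retain the candidates for which a positive measure subset survives and discard the others. A pigeonhole on candidates, enabled by choosing $n'$ sufficiently large relative to $n$, $m$, $|F|$, and $r$, then guarantees that an $IP_n$ subset of compatible candidates remains at each leaf, yielding the required branching of $T''$ at this level. Iterating this inductive step $m$ times completes the construction of $\mathbf{T}$ and $T''$, with properties (1)--(3) being maintained as explicit invariants.

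The main obstacle is precisely this last piece: ensuring that the forced base color $c_{f,j}$ can actually be achieved after left-multiplying by an arbitrary continuation $v_j\cdots v_i$ inside $T''$, rather than just obtaining some color by pigeonhole. This is what compels us to take $n'$ and $m'$ significantly larger than $n$ and $m$, so as to absorb both the density losses from Lemma~\ref{php2} and the combinatorial losses from repeatedly pigeonholing candidates. The finiteness of $F$, the fixed height $m$, and the bounded branching together ensure the required parameters remain finite, so the construction terminates and yields the desired color focusing tree.
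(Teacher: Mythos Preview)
Your approach is essentially the paper's: refine $A$ level by level, at each stage applying Lemma~\ref{php2} to the current $A$ and the available children of $T'$, then pigeonholing so that the new translates $A'\cdot f\cdot v_1\cdots v_{i+1}$ are monochromatic. Where you diverge is in treating the ``same color'' clause of property~(3) as a separate obstacle requiring an additional restriction-and-discard step on the candidates $v_i$. This is unnecessary, and the paper does not do it.

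The point you are missing is that the same-color clause is automatic from the nested containments that Lemma~\ref{php2} already provides. If $A_0\supset A_1\supset\cdots\supset A_{m'}=A$ are the successive refinements with $v_{\ell}\,A_{\ell}\subset A_{\ell-1}$ at each stage, then telescoping gives $v_j\cdots v_n\cdot A\subset A_{j-1}\subset A_{i-1}$ for all $i\le j\le n$. Right-multiplying by $f\cdot v_1\cdots v_{i-1}$ yields
\[
v_j\cdots v_n\cdot A\cdot f\cdot v_1\cdots v_{i-1}\ \subset\ A_{i-1}\cdot f\cdot v_1\cdots v_{i-1},
\]
and the right-hand side was made monochromatic at stage $i-1$. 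Hence both $A\cdot f\cdot v_1\cdots v_{i-1}$ and its left translate by $v_j\cdots v_n$ sit inside the same monochromatic set, so they share a color with no further work. In particular, your proposed restriction of $A'$ to $\{x:v_j\cdots v_i\cdot x\cdot f\cdot v_1\cdots v_{j-1}\in C_{c_{f,j}}\}$ is vacuous---every $x\in A'$ already satisfies it---so no candidates are discarded and the extra pigeonhole on candidates is never invoked. (Relatedly, one application of Lemma~\ref{php2} at level $i$ only gives $v_i A'\subset A_{i-1}$; the longer products $v_j\cdots v_i A'\subset A_{j-1}$ come from composing with the containments established at earlier stages, not from the single application.) With this observation your write-up collapses to exactly the paper's argument.
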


    Before proving Claim \ref{construct}, we will first use it to deduce Claim \ref{main tec}

    \begin{proof}[Proof of Claim \ref{main tec}]
          Given the data in Claim \ref{construct}, note that if there is a path $v_0,...,v_i,...,v_j$ in $T''$ and $f\in F$ such that the colors of the term $v_i\cdot...\cdot v_j$ and sets $A\cdot f\cdot v_0\cdot...\cdot v_i$ and $A\cdot f\cdot v_0\cdot...\cdot v_j$ agree and are not $C_0$, then Item (3) ensures that for any $x\in Af\cdot v_0\cdot...\cdot v_{i-1}$ and $y=v_i\cdot...\cdot v_{j}$ the set $\{x,y,xy,yx\}$ is monochromatic.  The same is true for more variables if we instead find a sequence $v_0,...,v_{a_1},...,v_{a_k}$ with this property, which would contradict the hypothesis of Lemma \ref{tec}.  This is exactly what we need to apply Lemma \ref{Baire}, which gives us the needed subtree $T\subset T''$ and set $C=A.$

    \end{proof}

    \begin{proof}[Proof of Claim \ref{construct}]
        We will iteratively find $C'\supset A_0\supset A_1\supset...\supset A_i$ and $T_0=T''\supset T_1\supset ...\supset T_i$ such that $A_i$ and $T_i$ satisfy Properties (1)-(3) of Claim \ref{construct} for rooted paths $1,...,v_{i-1}$ of length $i$, and terminate with $A=A_{m'}$ and $T''=T_{m'}$.  This is accomplished by iteratively applying the pigeonhole principle and Lemma \ref{php2}, with the only difficulty being the need to keep track of notation.
        
        To begin, first note by the pigeonhole principle that there is $A_0\subset C'$ with $\mu(A_0)\geq \mu(C')/(r+1)^{|F|}$ such that each set $A_0f$ is monochromatic for $f\in F.$  
        
        Now, suppose we have found $A_0\supset A_1\supset...\supset A_i$ and $T_0=T''\supset T_1\supset ...\supset T_i$ with the desired properties.  To find $A_{i+1}$ and $T_{i+1},$ consider a rooted path $1,v_1,...,v_i\in T_i$ and $f\in F.$  Applying Lemma \ref{php2} with $A_i,$ and the children $T_i(v_1,...,v_i)$ we find $A_i'\subset A_i$ of measure bounded from below and $T_{i+1,v_1,...,v_i}\subset T_i$ such that Property (3) of Claim \ref{construct} is satisfied for all $f\in F$ and paths of the form $v_1,...,v_i,v_{i+1},$ where $v_1,...,v_i$ is the previously specified sequence and $v_{i+1}\in T_{i+1,v_1,...,v_i}(v_1,...,v_i).$  Applying the pigeonhole principle, we find $A_{i,v_1,...,v_i}\subset A_i'$ such that $\mu(A_{i,v_1,...,v_i})\geq \mu(A_i')/(r+1)^m$ and the sets $A_{i,v_1,...,v_i}fv_1\cdot...\cdot v_{i+1}$ are monochromatic for the specified choice of $v_1,...,v_i$ and any choice of $v_{i+1}\in T_{i+1,f,v_1,...,v_i}(v_1,...,v_i)$ and $f\in F,$ ensuring that we satisfy property (2) of Claim \ref{construct} at this stage of the construction.  Iteratively repeating this process for each path $v_1,...,v_{i}\in T_i$ gives the desired set $A_{i+1}=\bigcap A_{i,v_1,...,v_i}$ and subtree $T_{i+1}=\bigcap T_{i+1,v_1,...,v_i}.$
    \end{proof}

    \section{The pattern $\{x,xy,yx\}$ in non-amenable groups}

    In this section we prove Proposition \ref{prop: f2}.  The idea is essentially the same as in the proof of Lemma \ref{proof:quasi}, but we use a weaker pigeonhole principle and do not need to keep track of the color of $y.$

    Before beginning, we recall a couple needed definitions.

    \begin{definition}

    In a group $G:$
        \begin{itemize}
            \item A set $T\subset F_2$ is (right) \textbf{thick} if for any finite $F\subset G$ there is a $t\in T$ with $Ft\subset T.$
            \item A set $S\subset F_2$ is (left) \textbf{syndetic} if there is a finite set $F\subset G$ such that $F^{-1}S=G.$
            \item A set $A\subset G$ is \textbf{piecewise syndetic} if it is the intersection of a thick set and a syndetic set.  
        \end{itemize}
    \end{definition}

    We will also use the following basic facts about these notions of size.  Their proofs all follow quickly from the definitions and can be found in \cite{bergelson1998notions} section 2.

    \begin{prop}\label{fact}
        Let $G$ be a group, $A\subset G$ a piecewise syndetic set and $S\subset G$ a syndetic set.  For all $g\in G:$

        \begin{enumerate}
            \item $gA$ and $Ag$ are piecewise syndetic.
            \item $gS$ and $Sg$ are syndetic. 
            \item If $A=C_0\cup...\cup C_r,$ then some $C_i$ is piecewise syndetic.
        \end{enumerate}
    \end{prop}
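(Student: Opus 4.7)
My plan is to unpack the definitions directly and translate witnesses; this is what the author means by the claim that everything follows quickly from the definitions.

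For parts (1) and (2), I write $A = T \cap S$ with $T$ thick and $S$ syndetic with finite witness $F$ (i.e., $F^{-1}S = G$), so $gA = (gT) \cap (gS)$ and $Ag = (Tg) \cap (Sg)$, and I check each factor. The set $gT$ is right-thick because for any finite $H$, thickness of $T$ applied to $g^{-1}Hg$ gives $s \in T$ with $(g^{-1}Hg)s \subset T$, whence $H(gs) \subset gT$ with $gs \in gT$. Similarly $Tg$ is right-thick because $Ht \subset T$ forces $Htg \subset Tg$. Syndeticity of $gS$ follows from $(gF)^{-1}(gS) = F^{-1}S = G$, and of $Sg$ from $F^{-1}(Sg) = (F^{-1}S)g = G$. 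This yields both (1) and (2).

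For (3) the key auxiliary step is the reformulation that $A$ is piecewise syndetic if and only if some finite $F$ makes $F^{-1}A$ thick. The direction I need is the forward one: given $A = T \cap S$ and $F$ witnessing $S$, for any finite $K$ containing $e$, thickness of $T$ applied to $FK$ gives $t \in T$ with $FKt \subset T$; for each $k \in K$, syndeticity yields $f_k \in F$ with $f_k kt \in S$, and $f_k kt \in T$ automatically, so $f_k kt \in A$ and $kt \in F^{-1}A$, giving $Kt \subset F^{-1}A$.

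Part (3) then reduces to induction on $r$. The base case is $A = C_0 \cup C_1$: if $C_1$ is not piecewise syndetic, then in particular $F^{-1}C_1$ is not thick, so there is a finite $K_1$ (taken to contain $e$) with no $t$ satisfying $K_1 t \subset F^{-1}C_1$. I claim $(FK_1)^{-1}C_0$ is thick. For finite $K$ containing $e$, thickness of $F^{-1}A$ applied to $K_1 K$ gives $t$ with $K_1 Kt \subset F^{-1}A$; for each $k \in K$, the row $K_1(kt) \subset F^{-1}A = F^{-1}C_0 \cup F^{-1}C_1$ cannot lie entirely in $F^{-1}C_1$ by choice of $K_1$, so some $k_1 \in K_1$ has $k_1 kt \in F^{-1}C_0$, whence $kt \in k_1^{-1}F^{-1}C_0 \subset (FK_1)^{-1}C_0$. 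The inductive step decomposes $A = (C_0 \cup \cdots \cup C_{r-1}) \cup C_r$ and applies the base case to conclude one of the two pieces is piecewise syndetic; if it is the large piece, the inductive hypothesis finishes. The main obstacle is exactly this row pigeonhole in the base case of (3), which is the one step beyond pure witness manipulation; everything else is bookkeeping around the paper's left/right conventions for thick and syndetic.
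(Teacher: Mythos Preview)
The paper does not actually prove this proposition; it merely states that the proofs ``follow quickly from the definitions'' and cites \cite{bergelson1998notions}, Section~2. Your direct verification is therefore exactly in the spirit of what the author intends, and the arguments for parts (1) and (2), as well as the row-pigeonhole argument for (3), are correct.

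One small expository slip: you write that for part (3) you only need the \emph{forward} direction of the equivalence ``$A$ is piecewise syndetic $\iff$ $F^{-1}A$ is thick for some finite $F$.'' In fact you use both: the forward direction to pass from $A$ to the thick set $F^{-1}A$, and the reverse direction at the end to conclude that $C_0$ is piecewise syndetic from the thickness of $(FK_1)^{-1}C_0$. The reverse direction is just as elementary (take $T' = F'^{-1}C_0$ and $S' = C_0 \cup (G \setminus T')$, with $e \in F'$; then $T' \cap S' = C_0$ and $F'^{-1}S' = G$), so this is not a genuine gap, but you should either state that you are using the full equivalence or add the one-line proof of the other implication.
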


    We will also need the following variant of the pigeonhole principle.

    \begin{prop}\label{pws php}
        If $A\subset G$ is piecewise syndetic, then $\{g: g^{-1}A\cap A \textnormal{ is piecewise syndetic}\}$ is syndetic.
    \end{prop}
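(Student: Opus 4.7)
The plan is to combine the definition of piecewise syndetic with the stability of thickness under intersection with finitely many translates of itself, so that a single pigeonhole step produces a left-sided witness $f \in F$ for each $g$. Fix a decomposition $A \supset T \cap S$ with $T$ thick and $S$ syndetic, and a finite set $F \subset G$ with $F^{-1}S = G$. Introduce the auxiliary set $T_F := \bigcap_{f \in F} f^{-1}T$. It is still thick, because for any finite $E \subset G$ the thickness of $T$ applied to the finite set $FE$ yields $t$ with $FEt \subset T$, i.e.\ $Et \subset T_F$. The crucial property is $T_F \subset F^{-1}A$: for $t \in T_F$ one has $ft \in T$ for every $f \in F$, and picking $f$ with $ft \in S$ (which exists because $F^{-1}S = G$) gives $ft \in T \cap S \subset A$.

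Next, fix any $g \in G$. The set $T \cap g^{-1}T_F$ is thick by the same pattern: for a finite $E$, thickness of $T$ applied to $E \cup FgE$ produces $t$ with $Et \subset T$ and $FgEt \subset T$, whence $Et \subset T \cap g^{-1}T_F$. Therefore $(T \cap g^{-1}T_F) \cap S$ is piecewise syndetic directly from the definition, and since $A \supset T \cap S$ we obtain
\[
A \cap g^{-1}T_F \;\supset\; (T \cap g^{-1}T_F) \cap S,
\]
so $A \cap g^{-1}T_F$ is itself piecewise syndetic as a superset of a piecewise syndetic set.

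Now use $T_F \subset F^{-1}A$ to write
\[
A \cap g^{-1}T_F \;\subset\; \bigcup_{f \in F} \bigl(A \cap (fg)^{-1}A\bigr),
\]
making the right-hand side piecewise syndetic. Proposition \ref{fact}(3) then gives an $f \in F$ with $A \cap (fg)^{-1}A$ piecewise syndetic, i.e.\ $fg \in B := \{h : h^{-1}A \cap A \text{ is piecewise syndetic}\}$. As $g$ was arbitrary, $F^{-1}B = G$, so $B$ is syndetic.

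The entire plan rests on engineering the auxiliary thick set $T_F$: it has to be small enough to be contained in $F^{-1}A$ so that the last-step decomposition produces only the $|F|$ translates $(fg)^{-1}A$, and it has to be large enough that $T \cap g^{-1}T_F$ remains thick and therefore meets the syndetic set $S$ in a piecewise syndetic set. The main obstacle this design overcomes is that a naive pigeonhole on $T \cap g^{-1}T$ only yields the two-sided containment $G \subset F^{-1}B F$, which does not imply $B$ is left syndetic in a non-abelian group.
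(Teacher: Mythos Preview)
Your argument is correct and follows essentially the same route as the paper's: decompose $A$ via a thick/syndetic pair, show the relevant intersections of a thick set with its translates stay thick, and then pigeonhole over the finite set $F$ witnessing syndeticity of $S$ to land in the desired return set. Your auxiliary set $T_F=\bigcap_{f\in F}f^{-1}T$ in fact makes rigorous a step the paper glosses over: the paper asserts ``since $S$ is syndetic, there is a finite $F$ with $T\subset F^{-1}A$,'' but for an arbitrary decomposition $A=S\cap T$ with $F^{-1}S=G$ this need not hold (one only gets $ft\in S$ for some $f$, not $ft\in T$). Passing to $T_F$ is exactly what is needed to force $ft\in T$ for \emph{every} $f$, so that the one $f$ with $ft\in S$ gives $ft\in A$; the paper's argument is implicitly doing this replacement. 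Otherwise the two proofs are interchangeable.
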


    This fact has a standard ultrafilter proof (which follows from combining \cite{hindman2011algebra} Theorems 4.39 and 4.40, for example) but we were unable to find a reference containing a combinatorial proof.  We give one below:

   \begin{proof}[Proof of Proposition \ref{pws php}]

    This will follow from the following auxiliary claim about thick sets.

    \begin{claim}
       If $T$ is thick, then for all $g\in G$ so is the set $g^{-1}T\cap T.$
    \end{claim}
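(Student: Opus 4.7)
The plan is to reduce the thickness of $g^{-1}T \cap T$ directly to the thickness of $T$ by a single enlargement of the finite test set. Fix $g \in G$ and a finite $F \subset G$. To verify that $g^{-1}T \cap T$ is thick, I need to produce $s \in g^{-1}T \cap T$ with $Fs \subset g^{-1}T \cap T$; unpacking the intersection, this is the conjunction of four conditions: $s \in T$, $gs \in T$, $Fs \subset T$, and $gFs \subset T$.

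All four of these conditions have the shape ``(something) $\cdot s \in T$,'' so they can be packaged as $F' s \subset T$ for the single finite set
\[
F' := \{1, g\} \cup F \cup gF.
\]
Since $T$ is thick and $F'$ is finite, the definition of thickness yields some $s \in T$ with $F' s \subset T$. I would then simply read off from $1, g \in F'$ that $s, gs \in T$, and from $F, gF \subset F'$ that $Fs, gFs \subset T$; these together give $s \in g^{-1}T \cap T$ and $Fs \subset g^{-1}T \cap T$, as required.

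There is essentially no obstacle here: the whole content of the claim is the observation that the conditions defining $g^{-1}T \cap T$ can be absorbed into a single finite set to which thickness of $T$ is applied. The only thing to be a bit careful about is remembering to include both $1$ and $g$ in $F'$ (to cover $s$ and $gs$ respectively), not just the translated copies $F$ and $gF$.
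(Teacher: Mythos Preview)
Your proof is correct and follows essentially the same approach as the paper: enlarge the finite test set and apply thickness of $T$ once. You are in fact slightly more careful than the paper's version, explicitly including $1$ and $g$ in $F'$ so that the witness $s$ lands in $g^{-1}T\cap T$; the paper's terser proof uses the set $g^{-1}F\cup F$ (which appears to be a typo for $gF\cup F$) and leaves the membership of $t$ in $g^{-1}T\cap T$ implicit.
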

       \begin{proof}
           Consider a finite subset $F\subset G$ and, using the fact that $T$ is thick, find a $t\in G$ such that $(g^{-1}F\cup F)t\subset T.$  This implies that $Ft\subset g^{-1}T\cap T$ as desired.
       \end{proof}

       Now to prove the Proposition, consider a piecewise syndetic set $A=S\cap T,$ where $S$ is syndetic and $T$ is thick.  Since $S$ is syndetic, there is a finite set $F\subset G$ such that $T\subset F^{-1}A.$  By our previous claim we know $T(g)=g^{-1}T\cap T$ is thick, and so $A\cap T(g)=S\cap T\cap T(g)=S\cap T(g)$ is piecewise syndetic.  Moreover, for all $x\in A\cap T(g)$ there is an $f\in F$ such that $fgx\in A.$  Coloring elements of $T(g)$ based on their corresponding $f$ and applying Proposition \ref{fact} (1) and (3), we see that $(fg)^{-1}A\cap A$ is piecewise syndetic.  As this holds for all $g\in G,$ we see that $F^{-1}\{g: g^{-1}A\cap A \textnormal{ is piecewise syndetic}\}=G,$ i.e. the set is syndetic.
   \end{proof}

    Proposition \ref{prop: f2} now follows from a standard dynamical color focusing argument:

    \begin{proof}
        Without loss of generality, assume that $C_0$ is piecewise syndetic.  We inductively build sequences of sets $A_0=C_0\supset A_1\supset...\supset A_{r+1}$ and $S_0,...,S_r\subset G,$ a sequence $y_0,...,y_{r+1}\in G,$ and a function $f:\{-1,0,...,r+1\}\rightarrow \{0,...,r\}$ such that:

        \begin{enumerate}
            \item $A_i$ is piecewise syndetic.
            \item $y_i\in S_i$ and $S_i$ is syndetic.
            \item $y_iA_{i+1}\subset A_{i}$ for each $i\geq 0.$
            \item $f(-1)=0$
            \item $A_{i+1}y_0\cdot...\cdot y_i$ is monochromatic of color $f(i)$ for each $i\geq 0.$

        \end{enumerate}

    To build such a sequence and use it to complete the proof, suppose that we have already found $A_i,$ $y_0,...,y_{i-1},$ and a partial function $f$ as desired.  Note by Proposition \ref{pws php} that there is a syndetic set $S_i$ such that $A'_i(y)=y^{-1}A_i\cap A_i$ is piecewise syndetic for each $y\in S_i.$  Moreover, coloring $a\in A'_i(y)$ based on the color of $ay_0\cdot...\cdot y_{i-1}y$ and applying Proposition \ref{fact} (3), for each $y$ we find a piecewise syndetic subset $A_i(y)\subset A'_i(y)$ such that $A_i(y)y_0\cdot...\cdot y_{i-1}y$ is monochromatic.  Now color $y\in S_i$ based on the color of $A_i(y)$ and apply Proposition \ref{fact} (3) again to find a piecewise syndetic set $S\subset S_i$ such that the colors of the sets $A_i(y)$ agree for all $y\in S.$  Let $f(i)$ be this color.  If $f(i)=f(j)$ for some $j<i,$ then we complete the proof by considering the set of $y\in y_{j+1}\cdot...\cdot y_{i-1}S$ and the set of $x\in y_0\cdot...\cdot y_jA_i(y),$ both of which are piecewise syndetic by Proposition \ref{fact} (1).  Otherwise, we choose $y_i\in S$ arbitrarily and set $A_{i+1}=A_i(y_i)$ to continue with our construction.  As we must eventually have $f(i)=f(j)$ for some $j<i$ by the pigeonhole principle, this process returns the desired configuration after at most $r+1$ steps.

    \end{proof}

\section{Questions}

  We end with two related questions.

  The first is a coloring variant of \cite{kra2022infinite} Question 8.7, which asked if every positive density set in an amenable group contains a subset of the form $AB\cup BA$ with $A,B$ infinite.

  \begin{question}
      Suppose that a sufficiently non commutative group is finitely colored.  Are there infinite sets $A,B$ such that $AB\cup BA$ is monochromatic and $ab\neq ba$ for all $a\in A$ and $b\in B$?  What if we also assume that the group is amenable? 
  \end{question}

  A weaker form of the above result where $A$ is required to be infinite but $B$ is only required to have size at least $k$ for any fixed $k$ follows from the pigeonhole principle.  The proof of Proposition \ref{prop: f2} also gives a somewhat stronger result, ensuring that $A\cup AB\cup BA$ is monochromatic.

Our second question concerns left vs right recurrence in amenable groups and requires the following definition.

\pagebreak

\begin{definition}\label{def: rec}
    Let $G$ be an amenable group with invariant measure $\mu.$  We say that $S\subset G:$

    \begin{itemize}
        \item is a \textbf{set of weak left recurrence} if $\{s\in S: s^{-1}A\cap A\neq \emptyset\}\neq \emptyset$ for all $A$ with $\mu(A)>0.$

        \item is a \textbf{set of left recurrence} if $\{s\in S: \mu(s^{-1}A\cap A)>0\}\neq \emptyset$ for all $A$ with $\mu(A)>0.$

        \item is a \textbf{set of nice left recurrence} if $\{s\in S: \mu(s^{-1}A\cap A)>\mu(A)-\delta\}\neq \emptyset$ for all $A$ with $\mu(A)>0$ and all $\delta>0.$
    \end{itemize}

    In all of the above items we say that $S$ is a \textbf{large set of (weak/nice) recurrence} if the set of such $s$ has positive measure rather than just being non-empty.  We also define right versions of these notions analogously.
\end{definition}

A crucial fact used during the proof of Theorem \ref{thm:main} was that left $IP_0$ sets and right $IP_0$ sets coincide, and, in particular via Lemma \ref{php}, are both left and right sets of nice recurrence.

\begin{question}
    When do the left and right notions from Definition \ref{def: rec} coincide?
\end{question}

\bibliographystyle{amsalpha}
\bibliography{bib}

\end{document}